\newif\ifTwoColumn              
\newif\ifTechReport
\theoremstyle{plain}
\newtheorem{theorem}{Theorem}
\newtheorem{lemma}{Lemma}
\newtheorem{proposition}{Proposition}
\newtheorem*{claim*}{Claim}
\newtheorem*{example*}{Example}
\newtheorem{example}{Example}
\theoremstyle{definition}
\newtheorem{definition}{Definition}
\theoremstyle{remark}
\newtheorem*{remark*}{Remark}
\newcommand{\R}{\mathbb{R}}
\newcommand{\bs}{\textbf}
\newcommand{\blue}{\textcolor{black}}
\newcommand{\vol}{\text{vol}}
\newcommand{\rect}{\text{rect}}
\newcommand{\ellipse}{\text{ell}}
\newcommand{\poly}{\text{poly}}
\begin{document}
\begin{frontmatter}

\ifTwoColumn
	\title{Robust Optimal Control with Adjustable Uncertainty Sets}
\else
	\title{Robust Optimal Control with Adjustable Uncertainty Sets \footnote{This manuscript is the preprint of a paper submitted to Automatica and is subject to Elsevier copyright. Elsevier maintains the sole rights of distribution or publication of the work in all forms and media. If accepted, the copy of record will be available at http://www.journals.elsevier.com/automatica/.}}
\fi

\author[ethz]{Xiaojing Zhang}
\author[ethz]{Maryam Kamgarpour}
\author[ethz]{Angelos Georghiou}
\author[oxford]{Paul Goulart}
\author[ethz]{John Lygeros}

\address[ethz]{Automatic Control Laboratory, ETH Zurich, Switzerland}
\address[oxford]{Department of Engineering Science, Oxford University, United Kingdom}

\tnotetext[fninfo]{E-mail addresses: \texttt{xiaozhan@control.ee.ethz.ch} (X. Zhang), \texttt{mkargam@control.ee.ethz.ch} (M. Kamgarpour), \texttt{angelosg@control.ee.ethz.ch} (A. Georghiou), \texttt{paul.goulart@eng.ox.ac.uk} (P. Goulart), \texttt{jlygeros@control.ee.ethz.ch} (J. Lygeros).}

\begin{abstract}	
\blue{In this paper, we develop a unified framework for  studying constrained robust optimal control problems with \emph{adjustable uncertainty sets}. In contrast to standard constrained  robust optimal control problems with known uncertainty sets, we treat the uncertainty sets in our problems as additional decision variables.} In particular, given a \blue{finite prediction horizon} and a metric for adjusting the uncertainty sets, we address the question of determining the optimal size and shape of the uncertainty sets, while simultaneously ensuring the existence of a control policy that will keep the system within its constraints for all possible disturbance realizations inside the adjusted uncertainty set. Since our problem subsumes the classical constrained robust optimal control design problem, it is computationally intractable in general. We demonstrate that by restricting the families of admissible uncertainty sets and control policies, the problem can be formulated as a tractable convex optimization problem. We show that our framework captures several families of (convex) uncertainty sets of practical interest, and illustrate our approach on a demand response problem of providing control reserves for a power system.

\vspace{0.5cm}
\emph{Keywords:} Robust Optimal Control, Adjustable Uncertainty Sets, Affine Policies, Robust Optimization

\end{abstract}

\end{frontmatter}

\section{Introduction}
Robust \blue{finite-horizon} optimal control of constrained linear systems subject to additive uncertainty has been studied extensively in the literature, both in the control 
\cite{BemporadMorari1999, LofbergADF2003, SkafBoyd2010} and operations research community \cite{ben2004adjustable, bertsimasThiele2006robustInventory, postekHertog2014multi}. Apart from issues such as stability and recursive feasibility that arise in the context of Model Predictive Control, significant amount of research is concerned with the approximation and efficient computation of the optimal control policies associated with such problems \cite{mayne:rawlings:rao:scokaert:00, SimMPCTAC2010, camacho2013model}.

\blue{Commonly, robust control problems of constrained systems over a finite horizon deal with uncertainty sets that are known a priori.}
In this paper, we add another layer of complexity to \blue{these} problems by allowing the uncertainty sets to be decision variables of our problems, and  refer to such problems as \blue{constrained} robust optimal control problems with \emph{adjustable uncertainty sets}. 
For example, if the uncertainty sets are interpreted as a system's \blue{ resilience against disturbance}, then our framework can be used \blue{in a robustness analysis setup} for determining the limits of robustness of a given system. The goal then is to determine the optimal size and shape of the uncertainty sets which maximize a given metric, while  ensuring the existence of a control policy that will keep the system within its constraints. Unfortunately, such problems are computationally intractable in general, since they subsume the standard robust optimal control problem with fixed uncertainty set. The aim of this paper is to propose a systematic method for finding approximate solutions in a computationally efficient way.

Our work is motivated by reserve provision problems, where the adjustable uncertainty set is interpreted as a \emph{reserve capacity}, which a system can offer to third parties, and for which it receives (financial) reward. In this case, the maximum reserve capacity can be computed by maximizing the size of the uncertainty set. Moreover, the reserve capacity is to be chosen such that for every admissible reserve demand, i.e.\ for every realization within the reserve capacity set, our system is indeed able to provide this reserve without violating its constraints. Reserve provision problems of this kind were first formulated and studied in \cite{zhangCDC2014} where it was shown  that for uncertainty sets described by norm balls, the  problems can be reformulated as tractable convex optimization problems. 
Another problem that admits the interpretation as a robust control problem with adjustable uncertainty set is \emph{robust input tracking} \cite{Vrettos14ifac, GoreckiJonesACC2015}, where the aim is to determine the largest set of inputs that can be tracked by a system without violating its constraints. Reserve provision and input tracking problems have recently received increased attention in demand response applications of control reserves for electrical power grids \cite{VrettosTPS2014, WuRobustWindPowerOpt2014, ZhangECC2015}. 

The purpose of this paper is two-fold: First, we generalize the work of \blue{\cite{zhangCDC2014,GoreckiJonesACC2015}} by considering a larger class of adjustable uncertainty sets \blue{based on techniques of conic convex optimization}. Second, we provide a unified framework for studying reserve provision, input tracking \blue{and robustness analysis} problems under the umbrella of constrained robust optimal control with adjustable uncertainty sets. The main contributions of this paper with respect to the existing literature can be summarized as follows: 

\begin{itemize}
	\item We show that if $(i)$ the uncertainty sets are restricted to those that can be expressed as affine transformations of properly selected \emph{primitive} convex sets, and $(ii)$ the control policies are restricted to be affine with respect to the elements in these \blue{primitive} sets, then the problems admit convex reformulations that can be solved efficiently, \blue{and whose size grows polynomially in the decision parameters}. \blue{In particular, we extend the results of \cite{zhangCDC2014,GoreckiJonesACC2015} in two ways: First, we show that any convex set can be used as a primitive set, allowing us to target a much larger class of uncertainty sets. Second, by allowing the primitive sets to be defined on higher dimensional spaces than those of the uncertainty sets, we are able to design more flexible uncertainty sets.}
	
	\item We identify families of uncertainty sets of practical interest, including norm-balls, ellipsoids and hyper-rectangles, and show that they can be adjusted efficiently. \blue{Extending the work of \cite{zhangCDC2014,GoreckiJonesACC2015}, we also show that by choosing the primitive set as the simplex, our framework enables us to efficiently optimize over compact polytopes with a predefined number of vertices}. Furthermore, we prove that if the primitive sets are polytopes (e.g.\ the simplex), then our policy approximation gives rise to continuous piece-wise affine controllers.
	
	\item We study a reserve provision problem that arises in power systems, and show how it can be formulated as a robust optimal control problem with an adjustable uncertainty set. The problem is addressed using the developed tools, \blue{and we show that it can be formulated as a linear optimization problem of modest size} that can be solved efficiently within 0.3 seconds, \blue{making it also practically applicable}. \blue{In additional to results available in the literature \cite{GoreckiJonesACC2015, VrettosTPS2014}}, we verify that our method produces the optimal solution for this problem for prediction horizons of up to eight time \blue{steps}. \blue{For larger horizons, we show that the (relative) suboptimality gap is bounded by $0.71\%$.}
		
\end{itemize}

This paper is organized as follows: Section \ref{sec:problemDescription} introduces the general problem setup. Section \ref{sec:modUncerSet} focuses on the problem of adjusting the uncertainty sets, while in Section \ref{sec:decisionRuleApprox}, we return to the original problem and restrict the family of control policies to obtain tractable instances thereof. Section \ref{sec:example} illustrates our approach on a demand response problem, \blue{while Section~\ref{sec:example2} demonstrates the usefulness of allowing the uncertainty sets to be projections of primitive convex sets.} Finally,
 Section \ref{sec:conclusion} concludes the paper. The Appendix contains auxiliary results needed to prove the main results of the paper.


\subsection*{Notation}
For given matrices $(A_1,\ldots,A_n)$, we define $A:=\text{diag}(A_1,\ldots,A_n)$ as the block-diagonal matrix with elements $(A_1,\ldots,A_n)$ on its diagonal. $A_{ij}$ denotes the \mbox{$(i,j)$-th} element of the matrix $A$, while $A_{\cdot j}$ denotes the $j$-th column of $A$. Given a cone $\mathcal{K}\subseteq\R^l$ and two vectors $a,b\in\R^{l}$, $a\preceq_\mathcal{K} b$ implies $(b-a)\in\mathcal{K}$. For a matrix $B\in\R^{m\times l}$, $B\succeq_\mathcal{K}0$ denotes row-wise inclusion in $\mathcal{K}$. For a symmetric matrix $C\in\R^{n\times n}$, $C\succeq0$ denotes positive semi-definiteness of $C$.  Given vectors $(v_1,\ldots,v_m)$, $v_i\in\R^l$, we denote their convex hull as $\text{conv}(v_1,\ldots,v_m)$. \blue{Moreover, $[v_1,\ldots,v_m] := [v_1^\top\ \ldots\ v_m^\top]^\top \in\R^{lm}$ denotes their vector concatenation.}

\section{Problem Formulation}\label{sec:problemDescription}
In this section, we formulate the robust optimal control problem with adjustable uncertainty set.  We consider uncertain linear systems  of the form
\begin{equation}\label{eq:systemDynamics_generic}
        x_{k+1} = A x_k + B u_k + Ew_k,
\end{equation}
where $x_k\in\R^{n_x}$ is the state at time step $k$ given an initial state $x_0\in\R^{n_x}$, $u_k\in\R^{n_u}$ is the control input and $w_k\in\mathbb{W}_k\subseteq\R^{n_w}$ is an uncertain disturbance. We consider compact polytopic state and input constraints
\begin{equation}\label{eq:XU_def}
\hspace{-0.3cm}
	\begin{array}{ll}
		& x_k\in\mathbb{X} :=\{x\in\R^{n_x}: F_x x \leq f_x\},\  k=1,\ldots,N, \\
		& u_k\in\mathbb{U}:=\{u\in\R^{n_u}: F_u u\leq f_u\},\ k=0,\ldots,N-1,
	\end{array}
\hspace{-0.4cm}
\end{equation}
where $F_x\in\R^{n_f\times n_x},\ f_x\in\R^{n_f},\ F_u\in\R^{n_g\times n_u},\ f_u\in\R^{n_g}$, and $n_f\ (n_g)$ is the number of state (input) constraints. Given a planning horizon $N$, we denote by $\phi_{k}(\mathbf{u},\mathbf{w})$ the predicted state after $k$ time steps resulting from the input sequence $\mathbf{u}:=[u_0,\ldots,u_{N-1}]\in\R^{Nn_u}$ and disturbance sequence $\mathbf{w}:=[w_0,\ldots,w_{N-1}]\in\R^{Nn_w}$. 

In contrast to  classical robust control problem formulations, we assume that the uncertainty set $\mathbb{W}_k$ is not fixed and needs to be adjusted according to some objective function $\varrho:\mathcal{P}(\R^{n_w})\to\R$, where $\mathcal{P}(\R^{n_w})$ denotes the power set of $\R^{n_w}$. For example, we may think of $\varrho(\mathbb{W}_k)$ as the volume of $\mathbb{W}_k$, although depending on the application, it can represent other qualities such as the diameter or circumference of $\mathbb{W}_k$. Our objective is to maximize $\varrho(\mathbb{W}_k)$, while simultaneously minimizing some operating cost and ensuring satisfaction of input and state constraints. Hence, the cost to be minimized is given by 
\begin{equation}\label{eq:TotalCost}
         \max_{\mathbf{w}\in\mathcal{W}} \left\{J(\mathbf{u},\mathbf{w})\right\} - \lambda\sum_{k=0}^{N-1}\varrho(\mathbb{W}_k),
\end{equation}
where $ J(\mathbf{u},\mathbf{w}) := \ell_f (\phi_N(\mathbf{u},\mathbf{w})) + \sum_{k=0}^{N-1} \ell \left( \phi_{k+1}(\mathbf{u},\mathbf{w}),u_k \right)$ is some ``nominal" cost function with $\ell:\R^{n_x}\times\R^{n_u}\to\R$ and $\ell_f:\R^{n_x}\to\R$ linear, and $\lambda\geq0$ is a user-defined weighting factor. \blue{Note that convex quadratic cost can also be incorporated in our framework by taking the certainty-equivalent cost $J(\mathbf u,\mathbf{\bar w})$ instead of the min-max cost in \eqref{eq:TotalCost}, where $\mathbf{\bar w}$ is some fixed (or expected) uncertainty.} Due to the presence of the uncertainties $\mathbf{w}$, we consider the design of a causal disturbance feedback policy $\bm\pi(\cdot) := [\pi_0(\cdot),\ldots,\pi_{N-1}(\cdot)]$, with each $\pi_k: \mathbb{W}_0\times\ldots\times\mathbb{W}_{k} \to\R^{n_u}$, such that the control input at each time step is given by $u_k = \pi_k(w_0,\ldots,w_k)$\footnote{Strictly causal policies can be incorporated by restricting $\pi_k(\cdot)$ to depend on $(w_0,\ldots,w_{k-1})$ only. For simplicity, this paper considers causal policies. However, all  subsequent results apply to strictly causal policies with minor modifications.}. Combining \eqref{eq:systemDynamics_generic}--\eqref{eq:TotalCost}, we  express the optimal control problem compactly as
\begin{equation}\label{eq:OptProblem_detGeneral}
\begin{array}{ll}
\text{min}   & \displaystyle\max_{\mathbf{w}\in\mathcal{W}} \left\{\mathbf{c}^\top \bm\pi(\mathbf{w})\right\} - \lambda\bm\varrho(\mathcal{W}) \\[0.3ex]
\text{s.t.}      &  \bm\pi(\cdot)\in\mathcal{C},\ \mathcal{W}\in\mathcal P(\R^{Nn_w}),   \\[0.3ex]
& \mathbf{C}\bm\pi(\mathbf{w}) + \mathbf{D}\mathbf{w} \leq \mathbf{d},\quad \forall\mathbf{w}\in\mathcal{W},
\end{array}
\end{equation}
with decision variables $(\bm\pi(\cdot),\mathcal{W})$, and $\mathcal{W} := \mathbb{W}_0 \times \ldots \times \mathbb{W}_{N-1}$,  $\bm{\varrho}(\mathcal{W}):=\sum_{k=0}^{N-1}\varrho(\mathbb{W}_k)$, and $\mathcal{C} := \{[\pi_0(\cdot),\ldots,\pi_{N-1}(\cdot)]:\ \pi_{k}:\mathbb{W}_0\times\cdots\times\mathbb{W}_k\rightarrow\R^{n_u},\,k=0,\ldots,N-1\}$ is the space of all causal control policies. $\mathbf{c}\in\R^{Nn_u}$, $\mathbf{C}\in\R^{N(n_f+n_g) \times Nn_u}$, $\mathbf{D}\in\R^{N(n_f+n_g) \times Nn_w}$, and $\mathbf{d}\in\R^{N(n_f+n_g)}$ are matrices constructed from the problem data, see e.g.\ \cite{Goulart2006} for an example of such a construction.
We call \eqref{eq:OptProblem_detGeneral} a \emph{robust optimal control problem with adjustable uncertainty set}, since the uncertainty set $\mathcal{W}$ is a decision variable. Note that if $\mathcal{W}$ is given and fixed, then the above problem is a standard robust optimal control problem with additive uncertainty, see e.g.\ \cite{Goulart2006, Lofberg2003} and the references therein. 

Problem \eqref{eq:OptProblem_detGeneral} is intractable because $(i)$ the optimization of the uncertainty set $\mathcal{W}$ is performed over arbitrary subsets  of $\R^{Nn_w}$;  $(ii)$ the constraints must be satisfied robustly for every uncertainty realization; and $(iii)$ the optimization of the policies is performed over the space of causal functions $\mathcal{C}$. 
The main goal of this paper is to present approximations of instances of \eqref{eq:OptProblem_detGeneral} that can be solved efficiently using tools from convex optimization. In Section~\ref{sec:modUncerSet}, we focus on the optimization of the uncertainty sets without the presence of the control inputs, and discuss several families of uncertainty sets that can be handled efficiently. We then return to the original problem in Section~\ref{sec:decisionRuleApprox}  and show that by applying  affine policy approximation, the resulting optimization problem is convex, and has a number of decision variables and constraints that grows polynomially in the problem parameters.

\section{Uncertainty Set Approximation via Primitive Sets}\label{sec:modUncerSet}
We consider initially the following simplified problem 
\begin{equation}\label{eq:abstract_P}
\begin{array}{ll}
\text{max}  &   \bm\varrho(\mathcal{W})\\[0.3ex]
\text{s.t.}   &   \mathcal{W} \in \mathcal P(\R^{Nn_w}),   \\[0.3ex]
&  \mathbf{D}\mathbf{w} \leq \mathbf{d},\quad \forall\mathbf{w}\in\mathcal{W},
\end{array}
\end{equation}
where $\mathcal{W}=\mathbb{W}_0\times\ldots\times\mathbb{W}_{N-1}$ is the decision variable. 
Problem~\eqref{eq:abstract_P} can be used, for example, to perform robustness analysis on a closed-loop or autonomous systems, where the control policy $\bm\pi(\cdot)$ in \eqref{eq:OptProblem_detGeneral} is fixed. The objective would be to compute the largest set $\mathcal{W}$, according to metric $\bm\varrho(\cdot)$, which the system can tolerate without violating its constraints. Note that Problem~\eqref{eq:abstract_P} becomes intractable when
$\mathcal{W}$ is a polytope with a fixed number of vertices and $\bm\varrho(\cdot)$ is the volume \cite{dyer1988complexity}. 
To gain computational tractability, we restrict $\mathcal{W}$ to be the affine transformation of a \emph{primitive set}, and choose $\bm\varrho(\cdot)$ from a pre-specified family of concave functions.

\begin{definition}[Primitive Set]
Given a convex cone $\mathcal{K}\subseteq\R^{l}$, we call a compact set $\mathbb{S}\subset\R^{n_s}$, $n_s\geq n_w$, a {primitive set} if it has a non-empty relative interior and can be represented as
\begin{equation}\label{eq:simple Set}
\mathbb{S} := \{s\in\R^{n_s}: Gs \preceq_\mathcal{K} g \},
\end{equation}
for some $G\in\R^{l\times n_s}$ and $g\in\R^{l}$. \qed
\end{definition}
\blue{Note that \emph{any} compact convex set is a primitive set, since every convex set admits a conic representation of the form \eqref{eq:simple Set} \cite[p.15]{rockafellar}}. From now on, we \emph{restrict} the uncertainty sets $\mathbb{W}_k$ to be of the form 
\begin{align*}
                & \mathbb{W}_k = Y_k \mathbb{S}_k + y_k := \{Y_k s + y_k: s\in\mathbb{S}_k\} \in \mathcal P(\R^{n_w}), \\
                & \hspace{0.9cm} (Y_k,y_k)\in\mathbb{Y}_k \subseteq (\R^{n_w\times n_s}\times\R^{n_w}),
\end{align*}
where $\mathbb{S}_k$ is a primitive set. \blue{ For simplicity},
we assume that for all stages $k=0,\ldots,N-1$, $\mathbb{S}_k$ is described by the same cone $\mathcal{K}\subseteq\R^l$ and the same matrices $G$ and $g$. \blue{However, all subsequent results can be easily extended to cases when the primitive sets are described by time-varying cones $\mathcal K_k\subset\R^{l_k}$ and matrices $G_k$ and $g_k$.} We refer to $(Y_k,y_k)$ as  \textit{shaping parameters} and assume they take values in a convex set $\mathbb{Y}_k$. Note that if $n_w< n_s$, then $\mathbb{W}_k$ can be seen as the projection of $\mathbb{S}_k$ onto $\R^{n_w}$. Therefore, intuitively speaking, the matrix $Y_k$ can be used to scale, rotate and project the set  $\mathbb{S}_k$, while the vector $y_k$ can be used to translate it.  By defining $\mathcal{S} := \mathbb{S}_0 \times \ldots \times \mathbb{S}_{N-1}$ and
\begin{align*}
\mathcal{Y} &:= \left\{ \mathbf{(Y,y)}\in\R^{Nn_w\times Nn_s}\times\R^{Nn_w} : \exists \{(Y_k,y_k)\in\mathbb{Y}_k\}_{k=0}^{N-1}, \right. \\
                                & \qquad  \left. \mathbf{Y}=\text{diag}(Y_0,\ldots,Y_{N-1}),\ \mathbf{y} = [y_0,\dots,y_{N-1}]  \right\},
\end{align*}
 we can compactly express the restriction on $\mathcal{W}$ as
\begin{equation}\label{eq:TrafoS2W}
\mathcal{W} = \mathbf Y \mathcal{S} + \mathbf y \in \mathcal P(\R^{Nn_w}),\quad (\mathbf{Y,y})\in\mathcal{Y}.
\end{equation}
Applying restriction  \eqref{eq:TrafoS2W} to problem~\eqref{eq:abstract_P} yields the following problem with decision variables $(\mathcal{W},\mathbf{Y,y})$: 
\begin{subequations}
\begin{equation}\label{eq:abstract_P2}
\begin{array}{ll}
\text{max }&   \bm\varrho(\mathcal{W})\\[0.3ex]
\text{s.t.} &   \mathcal{W} = \mathbf{Y}\mathcal{S}+\mathbf{y},\ \mathbf{(Y, y)} \in\mathcal{Y},   \\[0.3ex]
&  \mathbf{D}\mathbf{w} \leq \mathbf{d},\quad \forall\mathbf{w}\in\mathcal{W}.
\end{array}
\end{equation}
Using standard duality arguments, e.g.\ \cite[Theorem 3.1]{bental:nemirovski:99}, to reformulate the semi-infinite constraint leads to a bilinear and hence non-convex optimization problem. To circumvent this difficulty, we eliminate the variables $\mathbf w$ and $\mathcal W$ in \eqref{eq:abstract_P2} using \eqref{eq:TrafoS2W}. This leads to the following optimization problem with decision-independent uncertainty set $\mathcal{S}$:
\begin{equation}\label{eq:abstract_P_tract2}
\begin{array}{ll}
\text{max}&   \bm\varrho(\mathbf Y\mathcal{S}+\mathbf y)\\[0.3ex]
\text{s.t.} &   \mathbf{(Y, y)} \in\mathcal{Y},   \\[0.3ex]
&  \mathbf{D}(\mathbf{Y s + y}) \leq \mathbf{d},\quad \forall \mathbf{s}\in\mathcal{S},
\end{array}
\end{equation}
\end{subequations}
with decision variables $(\mathbf{Y,y})$. The following proposition shows that problems~\eqref{eq:abstract_P2} and \eqref{eq:abstract_P_tract2} are in fact equivalent.
\begin{proposition}\label{prop:equivalence_simpleProblem}
Problems~\eqref{eq:abstract_P2} and \eqref{eq:abstract_P_tract2} are equivalent in the following sense: both problems have the same optimal value, and there exists a mapping between feasible solutions in both problems.
\end{proposition}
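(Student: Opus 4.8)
The plan is to exploit the fact that in problem~\eqref{eq:abstract_P2} the uncertainty set $\mathcal{W}$ is not a genuine degree of freedom, but is uniquely pinned down by the shaping parameters $(\mathbf{Y},\mathbf{y})$ through the equality constraint $\mathcal{W}=\mathbf{Y}\mathcal{S}+\mathbf{y}$. Accordingly, I would establish the claimed equivalence by constructing an explicit correspondence between the feasible sets of the two problems and checking that it preserves the objective value pointwise. Concretely, define the map that sends any feasible triple $(\mathcal{W},\mathbf{Y},\mathbf{y})$ of \eqref{eq:abstract_P2} to the pair $(\mathbf{Y},\mathbf{y})$, together with the candidate inverse that sends a pair $(\mathbf{Y},\mathbf{y})$ to the triple $(\mathbf{Y}\mathcal{S}+\mathbf{y},\mathbf{Y},\mathbf{y})$. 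Since $\mathbf{Y}\mathbf{s}+\mathbf{y}\in\R^{Nn_w}$ for every $\mathbf{s}\in\mathcal{S}$, the set $\mathbf{Y}\mathcal{S}+\mathbf{y}$ indeed lies in $\mathcal{P}(\R^{Nn_w})$, so the inverse map is well defined.

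The first step is to verify that these maps carry feasible points to feasible points. The membership $(\mathbf{Y},\mathbf{y})\in\mathcal{Y}$ appears identically in both problems and is preserved trivially, so the substantive point is the equivalence of the two semi-infinite constraints. Here I would invoke the defining identity $\mathcal{W}=\mathbf{Y}\mathcal{S}+\mathbf{y}=\{\mathbf{Y}\mathbf{s}+\mathbf{y}:\mathbf{s}\in\mathcal{S}\}$, which states precisely that the image of $\mathcal{S}$ under the affine map $\mathbf{s}\mapsto\mathbf{Y}\mathbf{s}+\mathbf{y}$ equals $\mathcal{W}$. Consequently the family of vectors $\{\mathbf{D}\mathbf{w}:\mathbf{w}\in\mathcal{W}\}$ coincides set-for-set with $\{\mathbf{D}(\mathbf{Y}\mathbf{s}+\mathbf{y}):\mathbf{s}\in\mathcal{S}\}$, so $\mathbf{D}\mathbf{w}\leq\mathbf{d}$ holds for all $\mathbf{w}\in\mathcal{W}$ if and only if $\mathbf{D}(\mathbf{Y}\mathbf{s}+\mathbf{y})\leq\mathbf{d}$ holds for all $\mathbf{s}\in\mathcal{S}$. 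This yields feasibility in both directions. Matching the objectives is then immediate: whenever $(\mathcal{W},\mathbf{Y},\mathbf{y})$ is feasible for \eqref{eq:abstract_P2}, the equality constraint gives $\bm\varrho(\mathcal{W})=\bm\varrho(\mathbf{Y}\mathcal{S}+\mathbf{y})$, which is exactly the objective of \eqref{eq:abstract_P_tract2} evaluated at $(\mathbf{Y},\mathbf{y})$.

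Since the correspondence is a bijection between the two feasible sets that preserves the objective value pointwise, I would conclude that the two suprema coincide and that optimizers map to optimizers, establishing both assertions of the proposition, namely equality of optimal values and the existence of a feasible-solution mapping.

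I do not anticipate a serious obstacle, as the argument is essentially a variable-elimination observation. The only place warranting care is the constraint equivalence in the projected case $n_w<n_s$, where the affine map $\mathbf{s}\mapsto\mathbf{Y}\mathbf{s}+\mathbf{y}$ fails to be injective; there one must argue at the level of the image set $\mathcal{W}$ rather than individual preimages. Because both semi-infinite constraints quantify over the same image set, however, the non-injectivity causes no difficulty, and the equivalence goes through unchanged.
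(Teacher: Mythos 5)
Your proof is correct and takes essentially the same route as the paper: the paper's own proof just substitutes $\mathcal{W}=\mathbf{Y}\mathcal{S}+\mathbf{y}$ and invokes statement $(i)$ of Lemma~\ref{lem:liftingOperators}, whose content---that the semi-infinite constraint over $\mathcal{W}$ holds if and only if the one over $\mathcal{S}$ holds, because both quantify over the same image set---is exactly the equivalence you establish inline, including the observation that non-injectivity of $\mathbf{s}\mapsto\mathbf{Y}\mathbf{s}+\mathbf{y}$ is harmless.
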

\begin{proof}
The equivalence follows immediately from statement $(i)$ in Lemma \ref{lem:liftingOperators}  in the Appendix, and by substituting $\mathcal{W}$ with $\mathbf{Y}\mathcal{S}+\mathbf{y}$.
\end{proof}
Since $\mathcal S$ is convex, the semi-infinite constraint in \eqref{eq:abstract_P_tract2} can be reformulated using techniques based on strong duality of convex optimization, see e.g.\ \cite[Theorem 3.1]{bental:nemirovski:99}. This leads to the following finite-dimensional optimization problem:
\begin{equation}\label{eq:abstract_P_tract_ref}
\begin{array}{ll}
\textnormal{max}   & \bm \varrho(\mathbf{Y}\mathcal{S}+\mathbf{y}) \\[0.3ex]
\textnormal{s.t.}   & \mathbf{(Y,y)}\in\mathcal{Y},\, \bm\Lambda\in\R^{N(n_f+n_g) \times Nl},\, \bm\Lambda\succeq_{\bar{\mathcal K}^\star} 0,\\[0.3ex]
& \mathbf{D y} + \bm\Lambda  \mathbf{g} \leq \mathbf{d},\, \bm\Lambda \mathbf{G} = \mathbf{DY}, 
\end{array}
\end{equation}
with decision variables $(\mathbf{Y,y},\bm{\Lambda})$, where $\mathbf{G} := \text{diag}(G,\ldots,G)\in\R^{Nl\times Nn_s}$ $\mathbf{g} := [g,\ldots,g]\in\R^{Nl}$, $\bar{\mathcal{K}}:=\mathcal{K} \times \ldots \times \mathcal{K}\subset\R^{Nl}$ and $\bar{\mathcal{K}}^\star$ is its dual cone. Problem~\eqref{eq:abstract_P_tract_ref} has convex constraints, and constitutes an inner approximation of the infinite-dimensional problem \eqref{eq:abstract_P} due to restriction \eqref{eq:TrafoS2W}. Note that both the number of decision variables and the number of constraints in problem~\eqref{eq:abstract_P_tract_ref} grow polynomially with respect to the problem parameters $(N,n_x,n_s,n_w,n_f,n_g,l)$. If in addition the objective function $\bm\varrho(\cdot)$ is concave in the shaping variables $(\mathbf Y, \mathbf y)$, then problem~\eqref{eq:abstract_P_tract_ref} is a tractable convex optimization problem.

In the following, we discuss choices of the function $\bm\varrho(\cdot)$ that will render problem \eqref{eq:abstract_P_tract_ref} convex, and  show that a large family of uncertainty sets of practical interest is captured by the affine transformation \eqref{eq:TrafoS2W}.

\subsection{Volume Maximization}\label{sec:volMaximization}
A natural objective in problem \eqref{eq:abstract_P} is to maximize the volume of the sets $\mathbb{W}_k$. Indeed, for the special case of $n_s=n_w$, it is well-known that $\text{vol}(\mathbb{W}_k)=\sqrt{\text{det}(Y_k^\top Y_k)}\,\text{vol}(\mathbb{S}_k)$ \cite[Section 8.3.1]{boyd2004convex}, where $\text{vol}(\mathbb{W}_k)$ and $\text{vol}(\mathbb{S}_k)$ are the Lebesgue measures of the sets $\mathbb{W}_k$ and $\mathbb{S}_k$, respectively. If $\mathbb{Y}_k$ is chosen such that $Y_k$ is constrained to be positive-definite, i.e.\ $\mathbb{Y}_k:=\{(Y_k,y_k)\in(\R^{n_w \times n_w}\times \R^{n_w}):Y_k\succ0\}$, then maximizing $\varrho(Y_k\mathbb{S}_k+y_k)=\log\det(Y_k)$ maximizes $\text{vol}(\mathbb{W}_k)$, see \cite[Section 3.1.5]{boyd2004convex} for more details\footnote{\blue{Similarly, the volume of $\mathcal W$ is maximized by choosing $\bm\varrho(\mathbf Y\mathcal S+\mathbf y)=\log\det(\mathbf Y)$, subject to $n_w=n_s$ and $Y_k\succ0$.}}. Since $\log\det(\cdot)$ is a concave function, problem \eqref{eq:abstract_P_tract_ref} is a convex optimization problem that can be solved efficiently \cite{VandenbergheDetMax1998}. \blue{For the case $n_s>n_w$, maximizing the volume of $\mathbb{W}_k$ is generally a non-convex optimization problem, and a convex, albeit heuristic, objective function should be chosen to keep the optimization problem tractable. Examples of such cost functions are given in Section~\ref{sec:familiesUncertaintySets} below. }

\subsection{Specific Families of Uncertainty Sets}\label{sec:familiesUncertaintySets}
In practical applications, uncertainty sets $\mathbb{W}_k$ are typically constrained to have a specific geometric form, such as ellipsoidal or rectangular. This requirement can be easily incorporated in problem \eqref{eq:abstract_P} by including an additional constraint of the form $\mathbb{W}_k\in\Omega_k\subset\mathcal{P}(\R^{n_w})$, where $\Omega_k$ is the family of admissible uncertainty sets. In the following, we study families of uncertainty sets that often arise in robust control problems, focusing on norm balls, ellipsoids, hyper-rectangles and polytopes. We show that these representations can be obtained by appropriately selecting the primitive set $\mathbb{S}_k$ and constraint set $\mathbb{Y}_k$. To simplify notation, we omit the time indices $k$ in the subsequent discussion.

\subsubsection*{Ball Uncertainty Set}
$p$-norm ball uncertainty sets, with $p\in[1,\infty]$, take the form $\mathbb{W}=\{w\in\R^{n_w}: \|w-y\|_p \leq r\}$, with parameters \mbox{$r\geq0$}  and $y\in\R^{n_w}$. In the spirit of \cite{zhangCDC2014}, we choose the primitive set $\mathbb{S}=\{s\in\R^{n_w}: \|s\|_p \leq 1\}$, where $\mathcal K$ in \eqref{eq:simple Set} is the $p$-order cone, and restrict $\mathbb{Y}=\{(Y,y)\in(\R^{n_w \times n_w}\times \R^{n_w}):\exists r\in\R_+,\, Y=rI\}$, where $I\in\R^{n_w\times n_w}$ is the identity matrix. This allows us to express $p$-norm ball uncertainty sets as
 $\mathbb{W}=Y\mathbb{S}+y$, subject to $(Y,y)\in\mathbb{Y}$. A natural choice for the objective function is $\varrho(\mathbb{W})=r$, which is proportional to $\log\det(Y)=n_w r$ and thus maximizes the volume of $\mathbb{W}$. Since $\mathcal{K}^\star$ is the $q$-order cone, where $1/p+1/q=1$, and the set $\mathbb{Y}$ is described by linear constraints, problem \eqref{eq:abstract_P_tract_ref} is a conic convex optimization problem that can be solved efficiently \cite{XueYe2000efficient}.

\subsubsection*{Ellipsoidal Uncertainty Set}
Ellipsoidal uncertainty sets take the form $\mathbb{W}=\{w\in\R^{n_w}: (w-y)^\top \Sigma^{-1} (w-y)\leq1 \}$, with parameters $\Sigma=\Sigma^\top\succ0$ and $y\in\R^{n_w}$. By choosing the primitive set $\mathbb{S}=\{s\in\R^{n_w}:\|s\|_2\leq1\}$, where $\mathcal K$ in \eqref{eq:simple Set} is the second-order cone, and setting $\mathbb{Y}=\{(Y,y)\in(\R^{n_w \times n_w}\times \R^{n_w}): Y=Y^\top \succ 0\}$, we can express the ellipsoid as  $\mathbb{W}=Y\mathbb{S}+y$ by identifying $Y$ with $\Sigma^{1/2}$, see e.g.\  \cite[Section 2.2.2]{boyd2004convex}. The volume of $\mathbb{W}$ is maximized by choosing $\varrho(Y\mathbb{S}+y)=\log\det(Y)$. Since $\mathcal K^\star=\mathcal K$ and $\mathbb{Y}$ is described by linear matrix inequalities, problem~\eqref{eq:abstract_P_tract_ref} can be solved efficiently \cite{VandenbergheDetMax1998}.

\subsubsection*{Axis-aligned Hyper-Rectangular Uncertainty Sets}
Axis-aligned rectangular uncertainty sets take the general form $\mathbb{W}=\{w\in\R^{n_w}:-\gamma \leq w-y \leq \gamma\}$, with parameters $\gamma\in\R^{n_w}_{+}$ and $y\in\R^{n_w}$. By choosing $\mathbb{S}=\{s\in\R^{n_w}:\|s\|_\infty\leq1\}$, where $\mathcal K$ in \eqref{eq:simple Set} is the $\infty$-order cone, and $\mathbb{Y}=\{(Y,y)\in(\R^{n_w \times n_w}\times \R^{n_w}): \exists\gamma\in\R_+^{n_w},\,Y=\text{diag}(\gamma)\}$, we can write $\mathbb{W}=Y\mathbb{S}+y$ subject to $(Y,y)\in\mathbb{Y}$. The volume of $\mathbb{W}$ is maximized with $\varrho(Y\mathbb{S}+y)=\log\det(Y)=\sum_{i=1}^{n_w}\log Y_{ii}$. For given $c_i\in\R$, $i\in\{1,\ldots,n_w\}$, another choice of the objective could be $\varrho(Y\mathbb{S}+y)=\sum_{i=1}^{n_w} c_i Y_{ii}$, which maximizes the (weighted) circumference of the rectangle $\mathbb{W}$. Since the dual cone $\mathcal{K}^\star$ is the first-order cone and $\mathbb{Y}$ is described by linear constraints, problem~\eqref{eq:abstract_P_tract_ref} is a convex optimization problem for either choice of the objective function.

\subsubsection*{Polyhedral Uncertainty Set}
Compact polytopes with $m\geq n_w$ vertices take the general form $\mathbb{W} = \text{conv}(v^{(1)},\ldots,v^{(m)})$, with parameters $(v^{(1)},\ldots,v^{(m)})$. By choosing the primitive set as the simplex $\mathbb{S}=\text{conv}(e_1,\ldots,e_{m})$, where $e_i\in\R^{m}$ is the $i$-th unit vector and $\mathcal K=\R^m_+$ in \eqref{eq:simple Set}, and $\mathbb{Y}=\{(Y,y)\in(\R^{n_w \times m}\times \R^{n_w}): y= 0\}$, the polytope can be expressed as $\mathbb{W}=Y\mathbb{S}$, where $Y_{\cdot j}$ corresponds to $v^{(j)}$ for all $j=1,\ldots,m$. Unfortunately, maximizing the volume of a generic polytope with $m$ vertices is computationally intractable \cite{dyer1988complexity}. Instead, one can, \blue{for example,} use the linear objective function $\varrho(Y\mathbb{S})=\sum_{j=1}^{m} c_j^\top Y_{\cdot j}$, which places the individual vertices of $\mathbb{W}$ in user-specified directions $c_j\in\R^{n_w}$, $j=1,\ldots,m$ (``pushing"). With all constraints linear, the resulting problem is a linear optimization problem. \blue{ An alternative cost function is $\varrho(Y\mathbb{S})=-\sum_{j=1}^{m} \|d_j - Y_j\|_2^2$, which places the vertices of $\mathbb W$ close to chosen points $d_j\in\R^{n_w}$, $j=1,\ldots,m$ (``pulling"),  resulting in a convex quadratic optimization problem.  }

\section{Policy Approximation via Affine Decision Rules}\label{sec:decisionRuleApprox}
To provide a tractable approximation for optimizing over policies, we first restrict the  uncertainty set $\mathcal{W}$ to admit the affine representation \eqref{eq:TrafoS2W}, giving rise to the following infinite-dimensional problem:
\begin{subequations}
\begin{equation}\label{eq:OptProblem_detGeneral_1}
\begin{array}{ll}
\text{min}  & \displaystyle \max_{\mathbf{w}\in\mathcal{W}} \left\{\mathbf{c}^\top \bm\pi(\mathbf{w})\right\} - \lambda \bm\varrho(\mathcal{W}) \\[0.3ex]
\text{s.t.} &  \bm\pi(\cdot)\in\mathcal{C},\ \mathcal{W}=\mathbf{Y}\mathcal{S}+\mathbf{y},\ (\mathbf{Y},\mathbf{y})\in\mathcal{Y},\\[0.3ex]
& \mathbf{C}\bm\pi(\mathbf{w}) + \mathbf{D}\mathbf{w} \leq \mathbf{d},\qquad \forall\mathbf{w}\in\mathcal{W}, \end{array}
\end{equation}
with decision variables $(\bm\pi(\cdot),\mathcal{W},\mathbf{Y,y})$. Similar to problem~\eqref{eq:abstract_P2}, problem~\eqref{eq:OptProblem_detGeneral_1} has a uncertainty set parametrized  by the decision variables $(\mathbf{Y,y})$.
Therefore, we consider the following reformulation of problem~\eqref{eq:OptProblem_detGeneral_1}, parametrized  in terms of $\mathbf{s}$, and with decision-independent uncertainty set:
\begin{equation}\label{eq:OptProblem_detGeneral_2}
\begin{array}{ll}
\text{min} &  \displaystyle\max_{\mathbf{s}\in\mathcal{S}} \left\{\mathbf{c}^\top \bm{\widetilde\pi}(\mathbf{s})\right\} - 
\lambda\bm\varrho(\mathbf Y\mathcal{S}+\mathbf y)\\ [0.3ex]
\text{s.t.} &  \bm{\widetilde\pi}(\cdot)\in\widetilde{\mathcal{C}},\ (\mathbf{Y},\mathbf{y})\in\mathcal{Y}, \\[0.3ex]
& \mathbf{C}\bm{\widetilde\pi}(\mathbf{s}) + \mathbf{D}(\mathbf{Y}\mathbf{s}+\mathbf{y}) \leq \mathbf{d}, \quad \forall\mathbf{s}\in\mathcal{S}, 
\end{array}
\end{equation}
\end{subequations}
with decision variables $(\bm{\widetilde\pi}(\cdot),\mathbf{Y,y})$, and where $\widetilde{\mathcal{C}} := \{[\widetilde\pi_0(\cdot),\ldots,\widetilde\pi_{N-1}(\cdot)]:\,\widetilde\pi_{k}:\mathbb{S}_0\times\cdots\times\mathbb{S}_k\rightarrow\R^{n_u},\,k=0,\ldots,N-1\}$. The set $\widetilde{\mathcal{C}}$ ensures that  $\bm{\widetilde\pi}(\cdot)$ are causal policies, depending only on the first $k$ elements of $\mathbf{s}$. The following proposition shows that problems~\eqref{eq:OptProblem_detGeneral_1} and \eqref{eq:OptProblem_detGeneral_2} are equivalent.

\begin{proposition}\label{prop:equiv_reparaPolicies}
Problems~\eqref{eq:OptProblem_detGeneral_1} and \eqref{eq:OptProblem_detGeneral_2} are equivalent in the following sense: both problems have the same optimal value, and there exists a (not necessar\blue{ily} unique) mapping between feasible solutions in both problems.
\end{proposition}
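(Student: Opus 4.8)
The plan is to establish the equivalence of Problems~\eqref{eq:OptProblem_detGeneral_1} and \eqref{eq:OptProblem_detGeneral_2} by exhibiting an explicit bijection between their feasible solutions that preserves both the objective value and constraint satisfaction. The key observation is that the two problems share the identical shaping variables $(\mathbf{Y,y})$ and differ only in how the policy is parametrized: in \eqref{eq:OptProblem_detGeneral_1} the policy $\bm\pi(\cdot)$ is a function of the disturbance $\mathbf w$ ranging over $\mathcal W = \mathbf Y\mathcal S + \mathbf y$, whereas in \eqref{eq:OptProblem_detGeneral_2} the policy $\bm{\widetilde\pi}(\cdot)$ is a function of the primitive variable $\mathbf s$ ranging over $\mathcal S$. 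First I would fix an arbitrary feasible $(\mathbf Y,\mathbf y)\in\mathcal Y$, which is common to both problems, and then argue that for this fixed pair the feasible policies are in one-to-one correspondence modulo a change of variables dictated by the affine map $\mathbf s\mapsto \mathbf Y\mathbf s + \mathbf y$.

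The central step is the reparametrization of the policy. Given a feasible $\bm{\widetilde\pi}(\cdot)\in\widetilde{\mathcal C}$ for \eqref{eq:OptProblem_detGeneral_2}, I would like to construct a corresponding $\bm\pi(\cdot)$ for \eqref{eq:OptProblem_detGeneral_1} via $\bm\pi(\mathbf w) := \bm{\widetilde\pi}(\mathbf s)$ whenever $\mathbf w = \mathbf Y\mathbf s + \mathbf y$. The subtlety, and the reason I expect this to be the main obstacle, is that when $n_w < n_s$ the map $\mathbf s\mapsto \mathbf Y\mathbf s + \mathbf y$ is generally \emph{not} injective, so a given $\mathbf w\in\mathcal W$ may arise from multiple $\mathbf s\in\mathcal S$; this is precisely why the proposition only claims a mapping that need not be unique. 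To handle this I would appeal to the lifting machinery already invoked in Proposition~\ref{prop:equivalence_simpleProblem}, namely Lemma~\ref{lem:liftingOperators} in the Appendix, which records the correspondence between the sets $\mathcal W$ and $\mathbf Y\mathcal S + \mathbf y$ and supplies a (possibly set-valued or selection-based) right inverse of the affine map. Using such a selection $\mathbf s(\mathbf w)\in\mathcal S$ with $\mathbf Y\mathbf s(\mathbf w)+\mathbf y = \mathbf w$, I would define $\bm\pi(\mathbf w) := \bm{\widetilde\pi}(\mathbf s(\mathbf w))$; the reverse direction, from $\bm\pi(\cdot)$ to $\bm{\widetilde\pi}(\cdot)$, is the cleaner composition $\bm{\widetilde\pi}(\mathbf s) := \bm\pi(\mathbf Y\mathbf s + \mathbf y)$.

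With the correspondence in place, I would verify the three things that make it an equivalence. Causality is preserved because the affine map is block-diagonal in the stage variables, so $\widetilde\pi_k$ depending only on $(s_0,\dots,s_k)$ translates into $\pi_k$ depending only on $(w_0,\dots,w_k)$ and vice versa, matching the definitions of $\mathcal C$ and $\widetilde{\mathcal C}$. Constraint satisfaction transfers directly: the robust constraint $\mathbf C\bm{\widetilde\pi}(\mathbf s)+\mathbf D(\mathbf Y\mathbf s+\mathbf y)\le\mathbf d$ for all $\mathbf s\in\mathcal S$ becomes $\mathbf C\bm\pi(\mathbf w)+\mathbf D\mathbf w\le\mathbf d$ for all $\mathbf w\in\mathcal W$ under the substitution $\mathbf w = \mathbf Y\mathbf s+\mathbf y$, using that $\mathcal W$ is exactly the image of $\mathcal S$ under the affine map. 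Finally, the objective values coincide: the regularization term $-\lambda\bm\varrho(\mathbf Y\mathcal S+\mathbf y)=-\lambda\bm\varrho(\mathcal W)$ is literally the same once $\mathcal W$ is identified with $\mathbf Y\mathcal S+\mathbf y$, and the worst-case cost satisfies $\max_{\mathbf w\in\mathcal W}\mathbf c^\top\bm\pi(\mathbf w)=\max_{\mathbf s\in\mathcal S}\mathbf c^\top\bm{\widetilde\pi}(\mathbf s)$ since the two maxima are taken over sets related by the surjection $\mathbf s\mapsto\mathbf w$ and the composed policies agree on matching points.

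I would close by noting that because the correspondence is objective- and feasibility-preserving in both directions, any feasible (in particular optimal) solution of one problem yields a feasible solution of the other with identical objective value, which gives equality of optimal values and completes the proof. The only genuinely delicate point remains the non-injectivity when $n_w<n_s$, handled by the selection argument above; this is exactly what forces the qualifier that the mapping from \eqref{eq:OptProblem_detGeneral_2}-solutions back to \eqref{eq:OptProblem_detGeneral_1}-solutions need not be unique, and I would make sure to state explicitly which selection is used so that the constructed $\bm\pi(\cdot)$ is well defined.
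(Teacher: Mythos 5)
Your construction is essentially the paper's own proof: the forward direction is the same composition $\bm{\widetilde\pi}(\mathbf{s}):=\bm\pi(\mathbf{Y}\mathbf{s}+\mathbf{y})$, and your ``selection-based right inverse'' for the reverse direction is exactly the paper's lifting operator (Definition~\ref{def:Lifting} and the explicit construction in \eqref{L}), with causality in both directions resting, as you say, on the block-diagonal structure of $\mathbf{Y}$. Two remarks, one cosmetic and one substantive. Cosmetically, the selection machinery is not supplied by Lemma~\ref{lem:liftingOperators} (which only transfers constraint satisfaction between the two spaces) but by the lifting operator itself; and for causality it is not enough that \emph{some} right inverse exists --- the selection must be made stage-wise, $s_k$ a function of $w_k$ alone, which is precisely properties (P1)--(P2) of the lifting operator and is what your appeal to block-diagonality implicitly requires.

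The substantive point is your claim that $\max_{\mathbf{w}\in\mathcal{W}}\mathbf{c}^\top\bm\pi(\mathbf{w})=\max_{\mathbf{s}\in\mathcal{S}}\mathbf{c}^\top\bm{\widetilde\pi}(\mathbf{s})$ holds for the mapped solutions in \emph{both} directions. In the direction from \eqref{eq:OptProblem_detGeneral_2} to \eqref{eq:OptProblem_detGeneral_1}, with $\bm\pi(\mathbf{w}):=\bm{\widetilde\pi}(\mathbf{s}(\mathbf{w}))$, the left-hand side is a maximum of $\mathbf{c}^\top\bm{\widetilde\pi}$ over the \emph{range} of the selection, which when $n_w<n_s$ can be a proper subset of $\mathcal{S}$; non-selected preimages of some $\mathbf{w}$ may carry a strictly larger cost, so only ``$\leq$'' holds in general. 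This does not sink the proposition --- the inequality points the favorable way for a minimization, so you still get $\mathrm{opt}\eqref{eq:OptProblem_detGeneral_1}\leq\mathrm{opt}\eqref{eq:OptProblem_detGeneral_2}$, and combined with the exact equality in the forward direction the optimal values coincide --- but your stated argument (``the composed policies agree on matching points'') proves less than it claims, and you should either weaken that step to an inequality or do what the paper does: pass to the epigraph reformulation, so that the scalar $\tau$ bounding the worst-case cost is carried across unchanged and ``same objective value'' becomes literally true for the mapped feasible solutions.
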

\begin{proof}
The proof is provided in the Appendix.
\end{proof}

Proposition~\ref{prop:equiv_reparaPolicies} allows us to focus on problem~\eqref{eq:OptProblem_detGeneral_2}, which belongs to the class of multistage adaptive optimization problems, involving a continuum of decision variables and inequality constraints \cite{BertsimasGoyalPowerLimitations2012}. 
 Following \cite{ben2004adjustable}, we restrict the space of admissible policies to exhibit the following affine structure 
\begin{equation}\label{eq:det_ADF}
\displaystyle\widetilde\pi_k(s_0,\ldots,s_k) := p_k + \sum_{j=0}^k P_{k,j} s_j,\quad k=0,\ldots,N-1,
\end{equation}
for some $P_{k,j}\in\mathbb{R}^{n_u \times n_s}$ and $p_k\in\mathbb{R}^{n_u}$. The concatenated policy is expressed by $\bm{\widetilde\pi}(\mathbf{s}) = \mathbf{P}\mathbf{s}+\mathbf{p}$, where $\mathbf{P}\in\R^{Nn_u\times N n_s}$ is a lower-block-triangular matrix defined by the elements $P_{k,j}$ and $\mathbf{p}:=[p_0,\ldots,p_{N-1}]\in\R^{Nn_u}$. This restriction yields the following   semi-infinite problem,
\begin{equation}\label{eq:det_OptProblem}
\begin{array}{ll}
\text{min}  & \displaystyle \max_{\mathbf{s}\in\mathcal{S}} \left\{\mathbf{c}^\top (\mathbf{P}\mathbf{s}+\mathbf{p}) \right\} - \lambda\bm\varrho(\mathbf Y\mathcal{S}+\mathbf y)\\[0.3ex]
\text{s.t.}    &  \mathbf{P}\in\R^{Nn_u\times N n_s},\ \mathbf{p}\in\R^{Nn_u},\ (\mathbf{Y},\mathbf{y})\in\mathcal{Y}, \\[0.3ex]
& \mathbf{C}(\mathbf{P}\mathbf{s}+\mathbf{p}) + \mathbf{D}(\mathbf{Y}\mathbf{s}+\mathbf{y}) \leq \mathbf{d}, \quad \forall\mathbf{s}\in\mathcal{S},
\end{array}
\end{equation}
with a finite number of decision variables $(\mathbf{P,p,Y,y})$ and an infinite number of constraints. Since $\mathcal S$ is convex, one can employ the duality argument of \cite[Theorem 3.1]{bental:nemirovski:99} to reformulate problem~\eqref{eq:det_OptProblem} into the following finite-dimensional convex optimization problem:
\begin{equation}\label{eq:det_OptProblem_reformulated}
\begin{array}{ll}
\text{min}\quad & \tau - \lambda\bm\varrho(\mathbf Y\mathcal{S}+\mathbf y) \\[0.5ex]
\text{s.t.} \quad\  &  \tau\in\R,\ \mathbf{P}\in\R^{Nn_u\times N n_s},\ \mathbf{p}\in\R^{Nn_u},\ (\mathbf{Y},\mathbf{y})\in\mathcal{Y},\\[0.3ex]
& \bm{\mu}\in\R^{Nl},\,\bm{\mu}\succeq_{\bar{\mathcal K}^\star} 0,\,\bm\Lambda\in\R^{N(n_f + n_g) \times Nl},\, \bm\Lambda\succeq_{\bar{\mathcal K}^\star} 0,  \\ [0.3ex]
&\bm{c}^\top\bm{p} + \bm{\mu}^\top\bm{g}\leq \tau,\, \mathbf{G}^\top\bm{\mu} = \mathbf{P}^\top\bm{c},\\[0.3ex]
&\mathbf{C p} + \mathbf{D y} + \bm\Lambda  \mathbf{g} \leq \mathbf{d},\, \bm\Lambda \mathbf{G} = \mathbf{CP} + \mathbf{DY},
\end{array}
\end{equation}
with decision variables $(\tau,\mathbf{P,p,Y,y},\bm{\mu,\Lambda})$. Note that the size of problem~\eqref{eq:det_OptProblem_reformulated} grows polynomially in the parameters $(N,n_x,n_u,n_s,n_w,n_f,n_g,l)$. Furthermore, if $\bm\varrho(\cdot)$ is chosen as discussed in Sections~\ref{sec:volMaximization} and \ref{sec:familiesUncertaintySets}, then problem~\eqref{eq:det_OptProblem_reformulated} is a convex optimization problem that can be solved efficiently;
in particular, the affine policy does not alter the problem class.

The following theorem summarizes the relationship between the original problem \eqref{eq:OptProblem_detGeneral}, and its approximation \eqref{eq:det_OptProblem_reformulated}, obtained through the restrictions \eqref{eq:TrafoS2W} and \eqref{eq:det_ADF}:
\begin{theorem}\label{thm:innerApprox}
Problem~\eqref{eq:det_OptProblem_reformulated} constitutes an inner approximation of the original problem~\eqref{eq:OptProblem_detGeneral} in the following sense: every feasible solution of \eqref{eq:det_OptProblem_reformulated} is feasible in \eqref{eq:OptProblem_detGeneral}, and an upper bound to problem~\eqref{eq:OptProblem_detGeneral} can be found by solving problem~\eqref{eq:det_OptProblem_reformulated}.
\end{theorem}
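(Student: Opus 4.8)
The plan is to prove the theorem by traversing the chain of reductions that produced \eqref{eq:det_OptProblem_reformulated} \emph{in reverse}, checking at each link that feasibility is preserved and that the objective can only increase, so that \eqref{eq:det_OptProblem_reformulated} is a genuine restriction of \eqref{eq:OptProblem_detGeneral} whose minimum upper-bounds the original optimal value. The key observation that keeps the argument elementary is that only the \emph{weak-duality} half of \cite[Theorem 3.1]{bental:nemirovski:99} is needed: we never require the reformulation \eqref{eq:det_OptProblem}$\,\to\,$\eqref{eq:det_OptProblem_reformulated} to be exact, only that a dual-feasible certificate validates the corresponding robust constraints.

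First I would peel off the dual variables. Given any feasible tuple $(\tau,\mathbf P,\mathbf p,\mathbf Y,\mathbf y,\bm\mu,\bm\Lambda)$ of \eqref{eq:det_OptProblem_reformulated}, I would verify directly that the affine policy $\bm{\widetilde\pi}(\mathbf s)=\mathbf P\mathbf s+\mathbf p$ together with $(\mathbf Y,\mathbf y)$ is feasible for the semi-infinite problem \eqref{eq:det_OptProblem}. Fixing any $\mathbf s\in\mathcal S$, so that $\mathbf g-\mathbf G\mathbf s\in\bar{\mathcal K}$, and combining the equality $\bm\Lambda\mathbf G=\mathbf{CP}+\mathbf{DY}$ with the row-wise membership $\bm\Lambda\succeq_{\bar{\mathcal K}^\star}0$ gives $(\mathbf{CP}+\mathbf{DY})\mathbf s=\bm\Lambda\mathbf G\mathbf s\le\bm\Lambda\mathbf g$, whence $\mathbf C(\mathbf P\mathbf s+\mathbf p)+\mathbf D(\mathbf Y\mathbf s+\mathbf y)\le\mathbf{Cp}+\mathbf{Dy}+\bm\Lambda\mathbf g\le\mathbf d$. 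An identical argument using $\mathbf G^\top\bm\mu=\mathbf P^\top\mathbf c$ and $\bm\mu\succeq_{\bar{\mathcal K}^\star}0$ shows $\mathbf c^\top(\mathbf P\mathbf s+\mathbf p)\le\mathbf c^\top\mathbf p+\bm\mu^\top\mathbf g\le\tau$ for every $\mathbf s\in\mathcal S$, so that $\tau\ge\max_{\mathbf s\in\mathcal S}\{\mathbf c^\top(\mathbf P\mathbf s+\mathbf p)\}$ and hence the cost of \eqref{eq:det_OptProblem_reformulated} is no smaller than that of \eqref{eq:det_OptProblem} at the mapped point (the $-\lambda\bm\varrho(\mathbf Y\mathcal S+\mathbf y)$ term being identical, since $(\mathbf Y,\mathbf y)$ is carried over unchanged).

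With this point in hand I would propagate feasibility forward. Since the affine family \eqref{eq:det_ADF} is contained in the causal policy class $\widetilde{\mathcal C}$, the mapped point is feasible in \eqref{eq:OptProblem_detGeneral_2} with the same objective; Proposition~\ref{prop:equiv_reparaPolicies} then transports it to a feasible point of \eqref{eq:OptProblem_detGeneral_1} preserving the objective value; and because \eqref{eq:OptProblem_detGeneral_1} is exactly \eqref{eq:OptProblem_detGeneral} with $\mathcal W$ confined to the subfamily \eqref{eq:TrafoS2W} of $\mathcal P(\R^{Nn_w})$, the resulting pair $(\bm\pi(\cdot),\mathcal W)$ with $\mathcal W=\mathbf Y\mathcal S+\mathbf y$ is feasible in \eqref{eq:OptProblem_detGeneral}, again with unchanged cost. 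Chaining the two objective comparisons shows that at every feasible point of \eqref{eq:det_OptProblem_reformulated} one obtains a feasible point of \eqref{eq:OptProblem_detGeneral} whose cost is no larger than the reformulated cost; minimizing over all such points establishes simultaneously that feasible solutions map into \eqref{eq:OptProblem_detGeneral} and that the optimal value of \eqref{eq:det_OptProblem_reformulated} upper-bounds that of \eqref{eq:OptProblem_detGeneral}.

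I expect the only delicate step to be the conic bookkeeping in the second paragraph: one must respect the block structure $\mathbf G=\text{diag}(G,\ldots,G)$, $\mathbf g=[g,\ldots,g]$ and $\bar{\mathcal K}=\mathcal K\times\cdots\times\mathcal K$, and apply the ordering $\preceq_{\bar{\mathcal K}}$ together with its dual \emph{row-wise} rather than entrywise, so that $\bm\Lambda\mathbf G\mathbf s\le\bm\Lambda\mathbf g$ and $\bm\mu^\top\mathbf G\mathbf s\le\bm\mu^\top\mathbf g$ indeed hold in the ordinary componentwise sense needed to bound $\mathbf d$ and $\tau$. Once that certification is verified, the remainder is a diagram chase through the previously established equivalences, invoking no constraint qualification since only weak duality is used.
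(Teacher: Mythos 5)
Your proof is correct and follows essentially the same route as the paper, which states Theorem~\ref{thm:innerApprox} without a separate proof precisely because it is the composition of the chain already established: the restriction \eqref{eq:TrafoS2W}, the equivalence of Proposition~\ref{prop:equiv_reparaPolicies}, the affine policy restriction \eqref{eq:det_ADF}, and the duality-based reformulation of \eqref{eq:det_OptProblem} into \eqref{eq:det_OptProblem_reformulated}. Your explicit unpacking of that chain in reverse, including the observation that only the weak-duality direction of the conic reformulation is needed (so no constraint qualification is invoked), is a faithful rendering of the paper's implicit argument rather than a different one.
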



\subsection{Relationship between the policies \boldmath{$\pi(\cdot)$} and \boldmath{$\widetilde\pi(\cdot)$} }\label{subsec:RelationshipPolicies}

By restricting our attention to affine policies $\bm{\tilde{\pi}}(\cdot)$ in \eqref{eq:det_ADF}, we were able to obtain a tractable optimization problem in \eqref{eq:det_OptProblem_reformulated}. By Proposition \ref{prop:equiv_reparaPolicies}, these affine policies  are equivalent to some, possibly non-linear, policy $\bm\pi(\cdot)\in\mathcal C$. In the following, we show that if $\mathcal S$ is a polytope, then affine policies $\widetilde{\bm\pi}(\cdot)$ correspond to continuous piece-wise affine policies $\bm\pi(\cdot)$.
To see this, let $\mathcal{\widetilde C}_\text{aff}\subset\mathcal{\widetilde C}$ be the class of causal affine policies in problem~\eqref{eq:OptProblem_detGeneral_2} defined as in  \eqref{eq:det_ADF}; $\mathcal{C}_\textnormal{aff}\subset\mathcal{C}$ be the class of causal affine policies in problem~\eqref{eq:OptProblem_detGeneral_1}; and $\mathcal{C}_\textnormal{pwa}\subset\mathcal{C}$ be the class of causal, piece-wise affine, continuous policies in problem~\eqref{eq:OptProblem_detGeneral_1}. 
\begin{proposition}\label{prop}
Let $\mathcal{S}:=\mathbb S_0\times\ldots\times\mathbb S_{N-1}$. Then, the following hold:
\begin{itemize}\label{prop:relationshipPolicyW_and_PolicyS}
\item[(i)] For every  $(\bm\pi(\cdot),\mathcal W, (\mathbf Y, \mathbf y))\in\mathcal{C}_\textnormal{aff} \times \mathcal P(\R^{Nn_w}) \times \mathcal Y$ feasible in problem~\eqref{eq:OptProblem_detGeneral_1}, there exists a $(\bm{\widetilde\pi}(\cdot),(\mathbf Y,\mathbf y))\in\mathcal{\widetilde C}_\textnormal{aff}\times\mathcal Y$ feasible in problem~\eqref{eq:OptProblem_detGeneral_2} that achieves the same objective value.
\item[(ii)] \blue{If $\mathbf Y$ is invertible, then for every $(\bm{\widetilde\pi}(\cdot),(\mathbf Y,\mathbf y))\in\tilde{\mathcal C}_\textnormal{aff}\times\mathcal Y$ feasible in problem~\eqref{eq:OptProblem_detGeneral_2}, there exists a $(\bm\pi(\cdot),\mathcal W,(\mathbf Y,\mathbf y))\in\mathcal C_\textnormal{aff} \times \mathcal P(\R^{Nn_w}) \times \mathcal Y$ feasible in problem~\eqref{eq:OptProblem_detGeneral_1} that achieves the same objective. }
\item[(iii)] If $\mathcal{S}$ is a polytope, then for every $(\bm{\widetilde\pi}(\cdot),(\mathbf Y,\mathbf y))\in\mathcal{\widetilde C}_\textnormal{aff} \times \mathcal Y$ feasible in problem~\eqref{eq:OptProblem_detGeneral_2}, there exists a $(\bm\pi(\cdot),\mathcal W,(\mathbf Y,\mathbf y))\in\mathcal{C}_\textnormal{pwa}\times \mathcal P(\R^{Nn_w}) \times \mathcal Y$ feasible in problem~\eqref{eq:OptProblem_detGeneral_1} that achieves the same objective value.
\end{itemize}
\end{proposition}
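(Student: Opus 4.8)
The three claims all rest on the single observation that, since $\mathbf Y=\mathrm{diag}(Y_0,\ldots,Y_{N-1})$ is block diagonal, the affine change of variables $\mathbf w=\mathbf Y\mathbf s+\mathbf y$ maps $\mathcal S$ \emph{onto} $\mathcal W$ while respecting the causal (lower-block-triangular) structure of the policies. I would therefore treat parts $(i)$ and $(ii)$ as direct reparametrizations and reserve the real work for part $(iii)$. For $(i)$, write a feasible affine policy of \eqref{eq:OptProblem_detGeneral_1} as $\bm\pi(\mathbf w)=\mathbf K\mathbf w+\mathbf k$ with $\mathbf K$ lower-block-triangular, and set $\bm{\widetilde\pi}(\mathbf s):=\bm\pi(\mathbf Y\mathbf s+\mathbf y)$, i.e.\ $\mathbf P:=\mathbf K\mathbf Y$ and $\mathbf p:=\mathbf K\mathbf y+\mathbf k$. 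Because the product of a lower-block-triangular matrix with the block-diagonal $\mathbf Y$ is again lower-block-triangular, $\bm{\widetilde\pi}\in\mathcal{\widetilde C}_{\mathrm{aff}}$; keeping $(\mathbf Y,\mathbf y)$ unchanged, the robust constraint and the $\max$-term transfer verbatim under $\mathbf w=\mathbf Y\mathbf s+\mathbf y$ since this map sends $\mathcal S$ onto $\mathcal W$, and the $\bm\varrho$-term is literally identical. Part $(ii)$ is the same computation run backwards: when $\mathbf Y$ is invertible its inverse is again block diagonal, so $\bm\pi(\mathbf w):=\bm{\widetilde\pi}(\mathbf Y^{-1}(\mathbf w-\mathbf y))$ has lower-block-triangular gain $\mathbf P\mathbf Y^{-1}$ and hence lies in $\mathcal C_{\mathrm{aff}}$; invertibility is exactly what lets one recover $\mathbf s$ from $\mathbf w$, and is the reason a purely affine converse can fail without it.

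The substance is $(iii)$, where $\mathbf Y$ may collapse dimensions (the projection case $n_s>n_w$), so no affine inverse exists and one must produce a \emph{piece-wise affine} policy. The plan is to build, for each stage $k$ separately, a continuous piece-wise affine \emph{section} $\sigma_k:\mathbb W_k\to\mathbb S_k$ of the affine surjection $s_k\mapsto Y_ks_k+y_k$, i.e.\ a map satisfying $Y_k\sigma_k(w_k)+y_k=w_k$ on $\mathbb W_k$, and then to define $\bm\pi$ by composition, $\pi_k(w_0,\ldots,w_k):=\widetilde\pi_k(\sigma_0(w_0),\ldots,\sigma_k(w_k))$. Since each $\sigma_k$ depends on $w_k$ alone, this $\bm\pi$ is causal; being an affine map of continuous piece-wise affine maps it is continuous piece-wise affine, so $\bm\pi\in\mathcal C_{\mathrm{pwa}}$; and since $\sigma(\mathbf w):=(\sigma_0(w_0),\ldots,\sigma_{N-1}(w_{N-1}))\in\mathcal S$ with $\mathbf Y\sigma(\mathbf w)+\mathbf y=\mathbf w$, feasibility in \eqref{eq:OptProblem_detGeneral_2} immediately yields feasibility in \eqref{eq:OptProblem_detGeneral_1}. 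Existence of each $\sigma_k$ uses that $\mathcal S$, and hence each $\mathbb S_k$, is a polytope: I would triangulate $\mathbb W_k$ into simplices, assign to every vertex $v$ of the triangulation a preimage $s_v\in\mathbb S_k$ with $Y_ks_v+y_k=v$, and extend $\sigma_k$ by barycentric interpolation. Agreement of the assignments on shared faces gives continuity, convexity of $\mathbb S_k$ keeps the interpolant inside $\mathbb S_k$, and linearity of the interpolation in the barycentric weights preserves the section identity $Y_k\sigma_k+y_k=\mathrm{id}$.

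The one delicate point — and the step I expect to be the main obstacle — is showing that $\bm\pi$ achieves the \emph{same} worst-case cost, not merely a feasible (possibly smaller) one; equality of the $\bm\varrho$-terms is automatic, so everything hinges on $\max_{\mathbf w\in\mathcal W}\mathbf c^\top\bm\pi(\mathbf w)=\max_{\mathbf s\in\mathcal S}\mathbf c^\top\bm{\widetilde\pi}(\mathbf s)$. Because $\mathcal S=\mathbb S_0\times\cdots\times\mathbb S_{N-1}$ is a product and $\mathbf c^\top\bm{\widetilde\pi}$ is affine, this maximum separates across stages and is attained at a vertex $\nu_k^\star$ of each $\mathbb S_k$. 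The inequality ``$\le$'' holds trivially since $\sigma_k(\mathbb W_k)\subseteq\mathbb S_k$; for ``$\ge$'' I would, within the construction above, force $\omega_k^\star:=Y_k\nu_k^\star+y_k$ to be one of the triangulation vertices of $\mathbb W_k$ (adding it as a Steiner point if it is interior) and assign it the preimage $\nu_k^\star$, so that $\sigma_k(\omega_k^\star)=\nu_k^\star$ and the stage-$k$ maximum is recovered exactly. Summing the per-stage maxima then gives the claimed equality. This vertex-matching is the crux: a generic section need not be surjective onto $\mathbb S_k$ — its image is at most $n_w$-dimensional, hence a proper subset when $n_s>n_w$ — so without deliberately routing $\sigma_k$ through a maximizing vertex the worst-case cost could strictly decrease and the two objective values would fail to coincide.
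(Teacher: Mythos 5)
Your proof is correct, and parts $(i)$ and $(ii)$ are essentially the paper's own: the same substitutions $\bm{\widetilde\pi}(\mathbf s):=\bm\pi(\mathbf Y\mathbf s+\mathbf y)$ and $\bm\pi(\mathbf w):=\bm{\widetilde\pi}(\mathbf Y^{-1}(\mathbf w-\mathbf y))$, with causality preserved because multiplying a lower-block-triangular matrix by the block-diagonal $\mathbf Y$ (or $\mathbf Y^{-1}$) keeps it lower-block-triangular. In part $(iii)$ you take a genuinely different route. The paper does not triangulate anything: it composes $\bm{\widetilde\pi}$ with the lifting operator of \eqref{L}, $L_k(w)=\arg\min_{z\in\mathbb S_k}\{\|z\|_2^2:\,Y_kz+y_k=w\}$, and gets continuity and piecewise affinity of $L(\cdot)$ (hence of $\bm\pi=\mathbf P L(\cdot)+\mathbf p$) from parametric quadratic programming theory \cite[Theorem 7.7]{BorBemMorari_book}; feasibility then follows from Lemma~\ref{lem:liftingOperators}$(ii)$ and property (P3), exactly parallel to your composition argument. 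Your explicit section built by triangulating $\mathbb W_k$ and interpolating barycentrically is a sound, more elementary replacement for that citation: vertex preimages exist because $\mathbb W_k=Y_k\mathbb S_k+y_k$, convexity of $\mathbb S_k$ keeps the interpolant inside $\mathbb S_k$, and linearity of the interpolation preserves the identity $Y_k\sigma_k(w)+y_k=w$, so your $\sigma$ satisfies (P1)--(P3). The paper's route buys brevity; yours buys self-containedness plus one extra degree of freedom, which you exploit below.

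That extra freedom addresses the point you flag as delicate, and you are right that it is real --- but the paper sidesteps rather than resolves it. The paper proves the proposition on the epigraph reformulations \eqref{eq:compact_w}--\eqref{eq:compact_s}, carrying the same $\tau$ across, so ``achieves the same objective value'' there means the same value of $\tau-\lambda\bm\varrho$. Read against the min-max form \eqref{eq:OptProblem_detGeneral_1}, the paper's construction only yields $\max_{\mathbf w\in\mathcal W}\mathbf c^\top\bm\pi(\mathbf w)\le\max_{\mathbf s\in\mathcal S}\mathbf c^\top\bm{\widetilde\pi}(\mathbf s)$, and this can be strict when $n_s>n_w$, since $L(\mathcal W)$ is then generally a proper subset of $\mathcal S$ and the min-norm lifting has no reason to pass through a maximizing vertex of $\mathbb S_k$. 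Your vertex-routing device --- refining the triangulation so that $\sigma_k(Y_k\nu_k^\star+y_k)=\nu_k^\star$ for a per-stage maximizer $\nu_k^\star$ --- restores exact equality, correctly using that the affine worst case separates across the product $\mathcal S=\mathbb S_0\times\cdots\times\mathbb S_{N-1}$ and is attained at vertices. So your argument establishes the literal wording of the proposition in the min-max sense, slightly stronger than what the paper's proof delivers; for the paper's purposes (recovering an implementable causal policy that is feasible and no worse), the weaker inequality already suffices.
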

\begin{proof}
The proof is provided in the Appendix.
\end{proof}
From Proposition~\ref{prop} $(i)$ it follows that applying affine policies to problem \eqref{eq:OptProblem_detGeneral_2} produces at least as good results as applying affine policies to \eqref{eq:OptProblem_detGeneral_1}.  \blue{From part $(ii)$ it follows that if $\mathbf Y$ is invertible (e.g.\ $\mathbf Y\succ0$), then imposing $\bm{\widetilde\pi}(\cdot)\in\widetilde{\mathcal C}_\text{aff}$ in problem~\eqref{eq:OptProblem_detGeneral_2} is equivalent to imposing $\bm\pi\in\mathcal C_\text{aff}$ in problem~\eqref{eq:OptProblem_detGeneral_1}, but has the advantage that a convex optimization problem is solved instead of a bilinear optimization problem.} 
\blue{ Finally, it follows from the proof of Proposition~\ref{prop} that if $(\mathbf{P}^\star,\mathbf{p}^\star,\mathbf{Y^\star,y^\star})$ are the optimizers of problem~\eqref{eq:det_OptProblem_reformulated}, then a causal policy $\bm\pi(\cdot)\in\mathcal C$ can be obtained as
\begin{equation}\label{eq:recoveringW}
	\bm\pi(\mathbf{w}) := \mathbf{P}^\star \left[ (\mathbf{Y}^\star)^{-1}(\mathbf{w} - \mathbf{y}^\star) \right] + \mathbf{p}^\star
\end{equation}
if $\mathbf Y^\star$ is invertible, and otherwise as
\begin{equation}\label{eq:recoveringW_pwa}
\bm\pi(\mathbf w):=\mathbf{P}^\star L(\mathbf{w}) + \mathbf{p}^\star, 
\end{equation}
where $L:\mathcal{W}\to\mathcal{S}$ is a so-called \emph{lifting operator} (see Appendix). From a practical point of view, \eqref{eq:recoveringW} and \eqref{eq:recoveringW_pwa} also provide a way for computing the control input if a disturbance realization $\mathbf{w}\in\mathcal{W}$ is given or measured. }

\blue{We close this section by pointing out that instead of approximating $\bm{\tilde\pi}(\cdot)$ with an affine-policy, it can be approximated using more complex, e.g.\ piece-wise affine, policies. Similar to the case of $\bm{\tilde\pi}(\cdot)\in\tilde{\mathcal C}_\text{aff}$, it can be shown that if $\bm{\tilde\pi}(\cdot)\in\tilde{\mathcal C}_\text{pwa}$ and $\mathcal{S}$ is a polytope, then $\bm\pi(\cdot)$ will be piece-wise affine. 
The interested reader is referred to  \cite{georghiouGeneralized2015, GeorghiouBinaryMP2015} for tractable methods of designing piece-wise affine policies $\bm{\tilde\pi}(\cdot)$ using convex optimization techniques. }

\section{\blue{Example 1:} Frequency Reserve Provision}\label{sec:example}
In a power grid, the supply and demand of electricity must be balanced at all times to maintain frequency stability. This balance is achieved by the grid operator by procuring so-called frequency reserves that are dispatched whenever supply does not meet demand \cite{rebours2007survey}. Recently, there has been increasing interest in so-called demand response schemes, where flexible loads such as office buildings are used to provide such reserves \cite{callaway2011achieving}. Buildings participating in the demand response program are asked in real-time by the grid operator to adjust their electricity consumption within a given range (the so-called \emph{reserve capacity}). They are rewarded financially for this service through a payment that typically depends both on the size of the offered reserve capacity and on the actual reserves provided. In the following, we investigate the amount of frequency reserves commercial buildings can offer to the power grid.

\subsection{Problem Formulation}
Following \cite{Oldewurtel_PhD2011,Oldewurtel:energyBuildings:2012}, we assume that the building dynamics are of the form $x_{k+1} = Ax_k + B(u_k + \Delta u_k) + Ev_k$, where the states represent the temperatures in the room, walls, floor, and ceiling. The affine term $v_k\in\R^{n_v}$ represents uncontrolled inputs acting on the building, such as ambient temperature, solar radiation, and internal gains (number of building occupants, etc.). For simplicity, we assume that $v_k$ can be predicted perfectly, and refer to \cite{Oldewurtel_PhD2011, zhang:schildbach:sturzenegger:morari:13} for methods of estimating these. The control inputs consist of the building's heating, ventilation and air condition (HVAC) system, which we model through two input vectors $u_k$ and $\Delta u_k$. We interpret the (strictly causal) input $u_k$ as the nominal control input which we apply if no reserve is demanded at time $k$. The (causal) input $\Delta u_k$ models the necessary input change at time $k$ to follow the actual \emph{reserve demand} $w_k\in\mathbb{W}_k$, where  $\mathbb{W}_k\subseteq\R$ is the reserve capacity.  Note that $\Delta u_k$ is required to depend on $w_k$ since it is used in real-time to follow the reserve demand $w_k$, which is a regulation signal sent  by the grid operator.
Following the literature \cite{Oldewurtel:energyBuildings:2012, zhang:schildbach:sturzenegger:morari:13}, we model the electricity consumption of the HVAC system through a linear function of the form $u_k \mapsto \eta^\top u_k$, so that $e_k := \eta^\top u_k$ is the nominal electricity consumption, and $\Delta e_k := \eta^\top \Delta u_k$ is the change in electricity consumption if the input is changed from $u_k$ to $u_k+\Delta u_k$. In practice, $\Delta u_k$ is chosen such that it matches the reserve demand, satisfying $w_k = \eta^\top \Delta u_k$ for all $k=0,\ldots,N-1$.

The objective function is composed of two competing quatities: the maximization of reserve capacity, and the minimization of electricity cost. The electricity cost takes the form $\sum_{k=0}^{N-1} c_k\,  \eta^\top u_k$, where $c_k\in\R_+$ is the electricity price at time $k$. Following the current norm in Switzerland \cite{SwissGridBasicPrinciplesAS_2015}, 
we consider symmetric reserve capacity $\mathbb{W}_k = [-Y_k, Y_k]$, where $Y_k\in\R_+$. We assume that offering reserve capacity $\mathbb{W}_k$ of size $Y_k$ is compensated with a  reward of the form $\lambda\, Y_k$. For simplicity, the reward associated with the actual reserves provided $\mathbf{w} := [w_0,\ldots,w_{N-1}]$ is not considered.  
Since the reserve demand $\mathbf{w}$ is uncertain at the time of planning, we introduce the policies $\bm{\pi}(\cdot):=[\pi_0(\cdot),\ldots,\pi_{N-1}(\cdot)]\in\mathcal{C}_s$ and $\bm{\kappa}(\cdot):=[\kappa_0(\cdot),\ldots,\kappa_{N-1}(\cdot)]\in\mathcal{C}$ for $[u_0,\ldots,u_{N-1}]$ and $[\Delta u_0,\ldots,\Delta u_{N-1}]$, respectively, where $\mathcal{C}_s$ is the class of all strictly causal policies. 
\blue{Note that the inputs $u_k=\pi_k(w_0,\ldots,w_{k-1})$ are strictly causal because in practice they are decided and applied independent of the current realization $w_k$. In contrast, the inputs $\Delta u_k=\kappa_k(w_0,\ldots,w_{k})$ are only causal  since they must satisfy the (instantaneous) reserve request $w_k=\eta^\top \Delta u_k$ for all $w_k\in\mathbb W_k$. }

\blue{The frequency reserve provision problem can now be formulated as the following robust optimal control problem with adjustable uncertainty set:}
\begin{equation}\label{eq:example_demandResponse}
\begin{array}{ll}
\hspace{-0.1cm} 	\text{min}  & \displaystyle\max_{\mathbf{w}\in\mathcal{W}} \left\{\mathbf{c}^\top \mathbf{e}(\mathbf{w})\right\} - \lambda\, \bm\varrho(\mathcal{W}) \\[0.3ex]
\hspace{-0.05cm} 	\text{s.t.}     	&  \bm\pi(\cdot)\in\mathcal{C}_s,\ \bm{\kappa}(\cdot)\in\mathcal{C},\  \mathcal{W}=\times_{k=0}^{N-1}[-Y_k,Y_k],\ \mathbf{Y}\geq0  \\[0.3ex]
                        	& \!\!\!\left. 
                                \begin{array}{l}        \mathbf{x}(\mathbf{w}) = \mathbf{A} x_0 + \mathbf{B}(\bm\pi(\mathbf{w}) + \bm\kappa(\mathbf{w}))+\mathbf{E}\mathbf{v},\\[0.3ex]
                                                                \mathbf{F}\,\mathbf{x}(\mathbf{w}) \leq \mathbf{f},\ \mathbf{G}(\bm\pi(\mathbf{w}) + \bm\kappa(\mathbf{w})) \leq \mathbf{g}, \\[0.3ex]
                                                                \mathbf{e}(\mathbf{w}) = \bm\eta^\top\bm\pi(\mathbf{w}),\  \mathbf{w}=\bm{\eta}^\top\bm{\kappa}(\mathbf{w}),
                                        \end{array}   \hspace{-0.0cm} \right\} \forall\mathbf{w}\in\mathcal{W},
\end{array}
\end{equation}
with decision variables $(\bm\pi(\cdot),\bm\kappa(\cdot),\mathcal{W},\mathbf{Y})$, and $\mathbf{c} := [c_0,\ldots,c_{N-1}]$, $\mathbf{Y}$ is the collection of all $\{Y_k\}_{k=0}^{N-1}$, $\bm\varrho(\mathcal{W}) := \sum_{k=0}^{N-1} Y_k$, $\mathbf{v} := [v_0,\ldots,v_{N-1}]$, $x_0\in\R^{n_x}$ is the initial state, and $\bm\eta^\top := \text{diag}(\eta^\top,\ldots,\eta^\top)$. The matrices $\mathbf{(A,B,E,F,f,G,g)}$ can easily be constructed from the problem data. The inequality constraint   $\mathbf{F\,x(w)\leq f}$, ensures that comfort constraints are satisfied at all times. Similarly, the inequality constraint $\mathbf{G}(\bm\pi(\mathbf{w}) + \bm\kappa(\mathbf{w})) \leq \mathbf{g}$, ensures that the constraints on the HVAC systems are always satisfied. The first equality constraint describes the state dynamics, while  constraint $\mathbf{e}(\mathbf{w}) = \bm\eta^\top\bm\pi(\mathbf{w})$ models the (nominal) power consumption. Finally, the constraint $\mathbf{w} = \bm{\eta}^\top\bm{\kappa}(\mathbf{w}) $ ensures that for every reserve demand sequence $\mathbf{w}$, there exists a feasible input change $\bm\kappa(\mathbf{w})$ that matches the reserve demand.

\begin{figure}[t!]
        \centering
        \includegraphics[trim = 11mm 13mm 20mm 12mm, clip, width=0.70\textwidth]{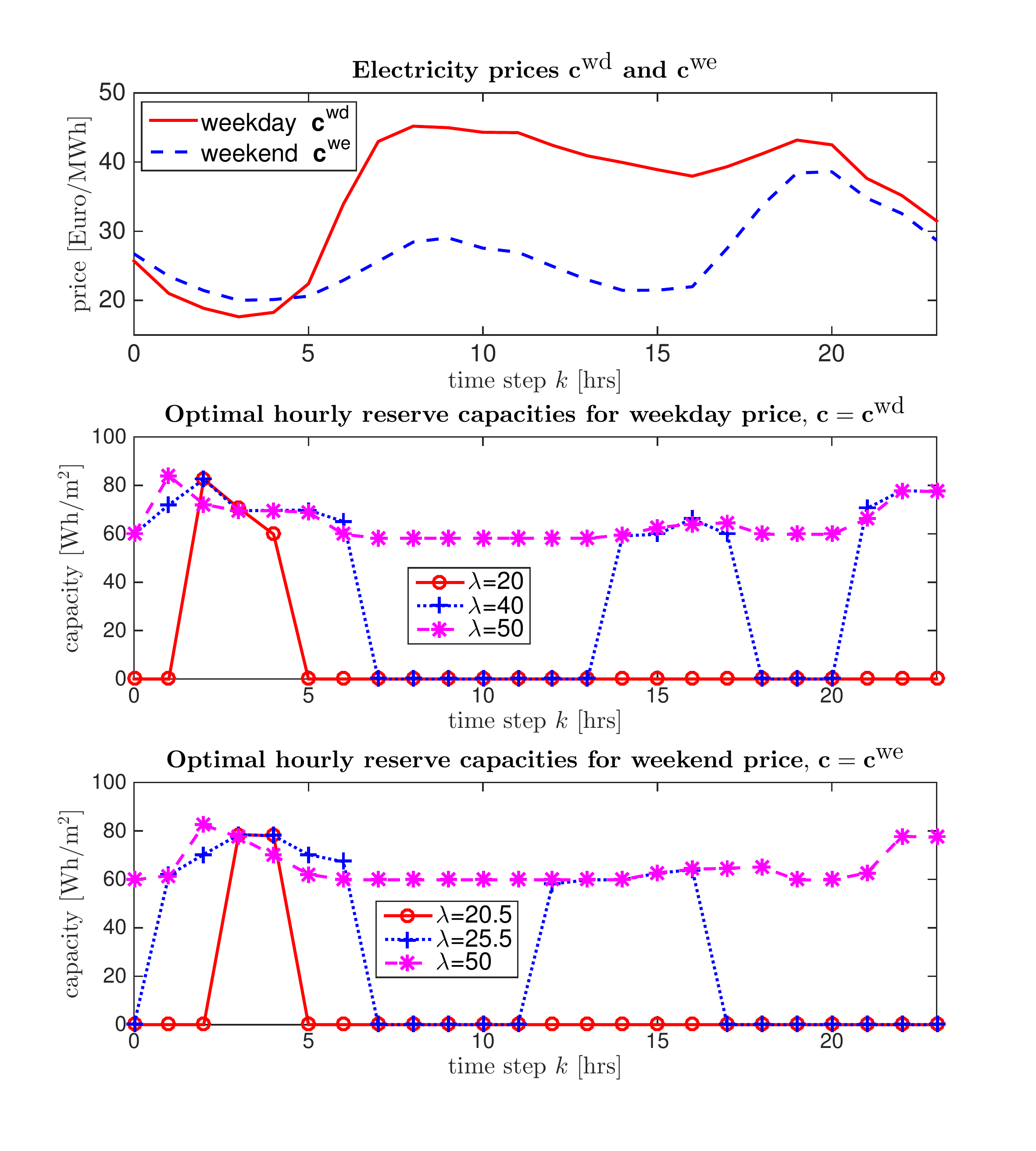}
        \vspace{-0.2cm}
        \caption{\emph{(Top)} Hourly day-ahead spot market prices for weekday (solid red line) and weekend (dashed blue line). The weekday prices are averaged over the workdays 14th--18th\ September 2015, and the weekend prices averaged over 19th--20th September 2015. Data were obtained from \url{www.epexspot.com}. \emph{(Center)} Optimal reserve capacities $Y_k^\star$ for weekday prices and reward factors  $\lambda\in\{20,40,50\}$. \emph{(Bottom)} Optimal reserve capacities $Y_k^\star$ for weekend prices and reward factors  $\lambda\in\{20.5,25.5,50\}$.}
        \label{fig:elecPrices_reservesStages}
\end{figure}

\begin{figure}[t!]
        \centering
        \includegraphics[trim = 10mm 13mm 20mm 11mm, clip, width=0.70\textwidth]{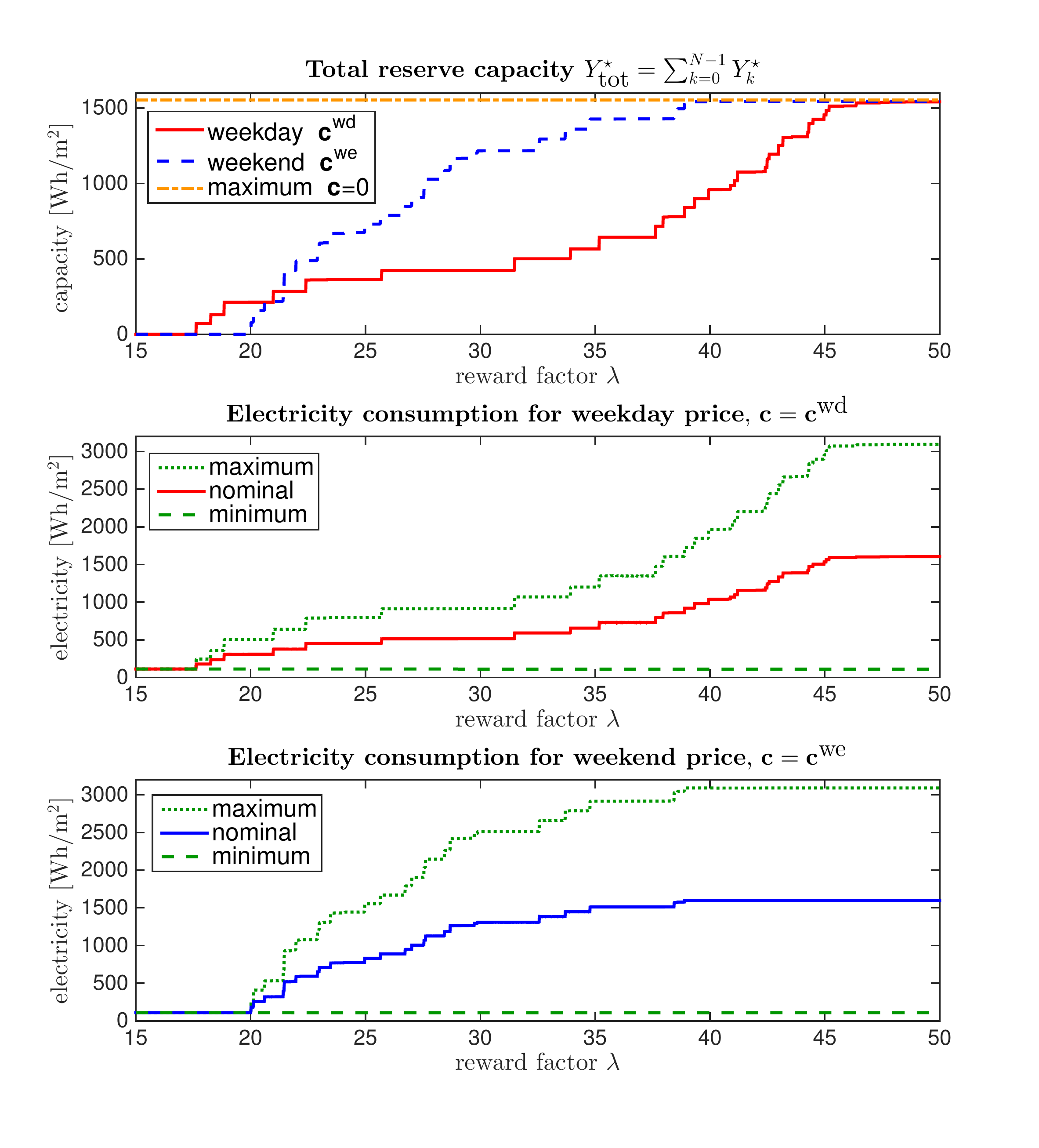}
        \vspace{-0.2cm}
        \caption{\emph{(Top)} Total reserve capacity $Y^\star=\sum_{k=0}^{N-1} Y_k^\star$ for weekday (solid red line) and weekend (dashed blue line). The orange dash-dotted line is the maximum possible offered reserve capacity, obtained by setting $\mathbf{c}=0$. \emph{(Center)} Nominal electricity consumption (solid red line), minimal electricity consumption (dashed green line), and maximum electricity consumption (dotted green line) for weekday price. \emph{(Bottom)} Same as center, but with weekend price.}
        \label{fig:totalReserves_elecConsumption}
\end{figure}

\subsection{Simulation Results}
We consider a one-room building model with one-hour discretization and horizon length $N=24$, based on the reduced building model presented in \cite[Section 4.5]{Oldewurtel_PhD2011}. The states are $x_k = [x_{k,1}, x_{k,2}, x_{k,3}] \in\R^3$, where $x_{k,1}$ is the room temperature, $x_{k,2}$ the inside wall temperature, and $x_{k,3}$ the outside wall temperature. We consider a deterministic sequence for the uncontrolled inputs $v_k=[v_{k,1},v_{k,2}, v_{k,3}]\in\R^{3}$, taken from \cite{zhang:schildbach:sturzenegger:morari:13}. In particular, $v_{k,1}$ represents the outside air temperature, $v_{k,2}$ the solar radiation, and $v_{k,3}$ the number of occupants in the building. We consider four inputs $u_k = [u_{k,1}, u_{k,2}, u_{k,3}, u_{k,4}] \in\R^4$, representing actuators commonly found in Swiss office buildings: a radiator heater, a cooled ceiling, a floor heating system, and mechanical ventilation \cite{Oldewurtel_PhD2011,Oldewurtel:energyBuildings:2012, zhang:schildbach:sturzenegger:morari:13}. Comfort constraints are given as $21^\circ C \leq x_{k,1} \leq 25^\circ C$ for all $k=0,\ldots,N-1$, while the inputs are upper- and lower-bounded\footnote{Often, no temperature constraints are imposed in office buildings during nights. Although not considered here, this feature can easily be incorporated in our problem setup.}. Moreover, we consider two electricity prices $\mathbf{c}^\text{wd}$ and $\mathbf{c}^\text{we}$, where the former exhibits a typical weekday price pattern and the latter a typical weekend price pattern. The prices are depicted in Figure~\ref{fig:elecPrices_reservesStages} (top).

Problem \eqref{eq:example_demandResponse} is an instance of \eqref{eq:OptProblem_detGeneral}, and can be (inner) approximated  based on the ideas discussed in Sections \ref{sec:modUncerSet} and \ref{sec:decisionRuleApprox}. First, following Section \ref{sec:familiesUncertaintySets}, we express $\mathcal{W}=\mathbf{Y}\mathcal{S}$ with $\mathbf{Y}:=\text{diag}(Y_0,\ldots,Y_{N-1})$ and $\mathcal{S}  := \{\mathbf{s}\in\R^N: -\mathbf{1}  \leq \mathbf{s} \leq \mathbf{1}\}$, where $\mathbf{1}\in\R^{N}$ is the all-one vector. Second, we restrict the input policies to admit an affine structure with respect to the primitive set $\mathcal S$ as in \eqref{eq:det_ADF}.

In our first numerical study, we examine the effect of electricity price and reward factor on the reserve capacity provided by the building. In particular, we solve problem~\eqref{eq:example_demandResponse} using the weekday prices with $\lambda\in\{20,40,50\}$, and using the weekend prices with $\lambda\in\{20.5,25.5,50\}$. The results are reported in Figure~\ref{fig:elecPrices_reservesStages} (center and bottom), which show the optimal reserve capacities $[Y_0^\star,\ldots,Y_{N-1}^\star]$ as a function of time. As we can see from the figures,  for a given reward factor $\lambda$, the building  provides reserve capacities only at times during which the reward factor is higher than the electricity price, i.e.\ whenever $\lambda>c_k$. This behavior can be explained as follows: For a building to provide a certain reserve capacity $Y_k$, it must increase its nominal electricity consumption by at least $Y_k$, or else it would not be able to reduce its electricity consumption by $Y_k$. However, since an increase of electricity consumption will incur an additional cost of $c_k Y_k$, the building will only provide reserve capacity of size $Y_k$ if the reward for doing so outweighs the additional electricity cost, i.e.\ whenever $\lambda Y_k > c_k Y_k$. Otherwise, if  $\lambda Y_k < c_k Y_k$, the building will choose not to provide reserves, resulting in the jumps in the reserve capacity profile shown in Figure~\ref{fig:elecPrices_reservesStages} (center and bottom). 
If non-symmetric reserve capacities were allowed, then the building would provide (positive) reserves $\mathbb W_k = [0, \bar Y_k]$ when $\lambda<c_k$. During these times, the building is able to increase its electricity consumption by $w_k=\eta^\top \Delta u_k$, without the need to increase its nominal electricity consumption $e_k=\eta^\top u_k$, since  $w_k\geq0$.
Finally, we point out that, due to the dynamics of the buildings and the comfort constraints, the level of reserves provided can change as $\lambda$ takes on different values.
For example, we see from Figure~\ref{fig:elecPrices_reservesStages} that the reserve profile with $\lambda=50$ is not always higher than that with $\lambda=40$ (center figure) or $\lambda=25.5$ (bottom figure), even if the total reserve capacity turns out to be higher, as we see later on. 


In our second numerical study, we study the effect of the reward factor $\lambda$ on the total reserve capacity $Y_\text{tot}^\star := \sum_{k=1}^{N-1}Y_k^\star$ offered by the system. To this end, we solve problem~\eqref{eq:example_demandResponse} with weekday and weekend prices for reward factors $\lambda\in[15,50]$. The results are depicted in Figure~\ref{fig:totalReserves_elecConsumption}~(top), which compares the total reserves for $\mathbf{c}=\mathbf{c}^\text{wd}$ (solid red line), $\mathbf{c}=\mathbf{c}^\text{we}$ (dashed blue line), and the maximum possible capacity obtained when $\mathbf{c}=\mathbf{0}$ (dash-dotted line). Figure~\ref{fig:totalReserves_elecConsumption}~(top) allows us to make the following two observations: First, for most values of $\lambda$, the building provides more reserves during the weekend than during weekdays. This is intuitive because the electricity price during the weekend is typically lower than during weekday, compare with Figure~\ref{fig:elecPrices_reservesStages}~(top). Second, when $\lambda$ is small, more reserves are provided during weekdays, which can be explained by the fact that the minimum electricity price is lower on weekdays than during the weekend, incentivizing the building to provide reserves even for small values of $\lambda$. From a practical point of view, Figure~\ref{fig:totalReserves_elecConsumption}~(top) can be used to determine the optimal level of reserves that should be offered for a given reward $\lambda$. Alternatively, in case of conditional bidding (i.e.\ the amount of reserve bid in the auction is a function of the reward), Figure~\ref{fig:totalReserves_elecConsumption}~(top) can be used to determine the optimal tradeoff between the amount of reserves and the reward factor.

In our final numerical study, we investigate the effect of the reward factor $\lambda$ on the building's electricity consumption. To this end, we define the nominal, maximal and minimal electricity consumption as $\bm{\eta}^\top ( \bm\pi^\star(\mathbf{0})+\bm\kappa^\star(\mathbf 0))$, $\max_{\mathbf{w}\in\mathcal{W}^\star}\{\bm\eta^\top ( \bm\pi^\star(\mathbf w)+\bm\kappa^\star(\mathbf w))\}$, and $\min_{\mathbf{w}\in\mathcal{W}^\star}\{\bm\eta^\top ( \bm\pi^\star(\mathbf w)+\bm\kappa^\star(\mathbf w))\}$,  respectively, where $\bm\pi^\star(\cdot)$ and $\bm\kappa^\star(\cdot)$ are the optimal affine policies computed from problem~\eqref{eq:example_demandResponse}, and $\mathcal{W}^\star$ is the associated optimal reserve capacity. The results are depicted in Figure~\ref{fig:totalReserves_elecConsumption}~(center and bottom). It can be seen that as the building provides more reserves for increasing $\lambda$, it must also increase its nominal electricity consumption. Indeed, as can be verified, the nominal electricity consumption increases in the exact same fashion as the total reserve capacity depicted in Figure~\ref{fig:totalReserves_elecConsumption}~(top). Similar to what we have seen before, such behavior is to be expected since to provide a certain level of reserves, the building must increase its nominal electricity consumption by the same amount. Moreover, the interval between the maximal and minimal electricity consumption can be interpreted as the ``slack" of the building, within which the building can adjust its electricity consumption without violating its constraints.


We finish this section with three concluding remarks: $(i)$ Our framework also allows for non-symmetric reserve capacities. For example, non-symmetric reserve capacities of the form $[-\underline{Y}_k,\overline{Y}_k]$ can be obtained by  replacing $\mathcal{W}=\mathbf Y\mathcal S$ in \eqref{eq:example_demandResponse} with $\mathcal{W}=\mathbf Y\mathcal S+\mathbf y$ and $-\mathbf Y \leq \text{diag}(\mathbf y) \leq \mathbf Y$. Similarly, positive reserve capacities are obtained by choosing the primitive set $\mathcal{S}$ in \eqref{eq:example_demandResponse} as $\mathcal{S}=[0,1]^N$, or as $\mathcal{S}=[-1,0]^N$ if negative reserves are desired.
$(ii)$~It is well-known that restricting the input policies to be affine can be conservative, and  that measuring this loss of optimality exactly is as challenging as determining the optimal policy. Nevertheless, for small horizons, the solution to problem \eqref{eq:example_demandResponse} can be computed exactly by enumerating all corner points of $\mathcal{W}$, and then solving the corresponding optimization problem \cite{Scokaert1998}. Unfortunately, this approach is only applicable if $N$ is small, since the problem size grows exponentially in $N$ \cite{Scokaert1998}. Nevertheless, if $N\leq8$ in \eqref{eq:example_demandResponse}, by using this enumeration approach, we verified that affine policies are indeed optimal for all $\lambda\in[15,50]$. 
\blue{For longer horizons $9\leq N \leq 24$, a bound on the loss of optimality can be obtained based on the method introduced in \cite{BertsimasGeorghiouOR2015}. Using this approach, we have verified that the relative gap between the exact solution and the affine policies is consistently below 0.71\%, for all $\lambda\in[15,50]$. This demonstrates that the affine policy approximation achieves a high degree
of precision for this example, giving us hope that affine policies are indeed optimal in this case.} 
$(iii)$~We highlight that under the affine policy restriction, problem~\eqref{eq:example_demandResponse} with $N=24$ can be written as a linear optimization problem with roughly 13\,000 decision variables and 18\,000 constraints. For fixed $\lambda$, each problem was solved within 0.3 seconds using the optimization solver Mosek on a standard laptop equipped with 16 GB RAM and a 2.6 GHz quad-core Intel i7 processor, demonstrating that our proposed solution method is indeed computationally tractable.

\section{\blue{Example 2: Robustness Analysis}}\label{sec:example2}
We next apply our framework to a robustness analysis problem, where the goal is to determine the largest uncertainty set a system can tolerate without violating its constraints. We consider a system of the form \eqref{eq:systemDynamics_generic} with parameters $n_x=2$, $n_u=1$, $n_w=2$, and system matrices $A=I$, $B = [1, 0.7]$ and $E=-I$, where $I\in\R^{2\times2}$ is the identity matrix. To simplify the subsequent analysis, we set $N=1$, $x_0 =\bar x=0$, and $u_0=\pi_0(w_0)\in\mathcal C$. Hence, the robustness analysis problem can be written as a robust optimal control problem with adjustable uncertainty set:

\begin{equation}\label{eq:example_maxUncSet}
\begin{array}{ll}
\hspace{-0.1cm} 	\text{max}  & \text{vol}(\mathbb {W}) \\[0.3ex]
\hspace{-0.05cm} 	\text{s.t.}     	&  \pi(\cdot)\in\mathcal{C}, \ \mathbb W\in\mathcal P(\R^2),\ \ \bar x=0, \\[0.3ex]
                        	& \!\!\!\left. 
                                \begin{array}{l}        x(w) = A \bar x + B\pi(w) - w,\\[0.3ex]
                                                                x(w) \in \mathcal X,\ \pi(w) \in \mathcal U, 
                                        \end{array}   \hspace{-0.0cm} \right\} \forall w\in\mathbb{W},
\end{array}
\end{equation}
with optimization variables $(\mathbb W,\pi(\cdot))$, where, for simplicity, we have omitted the time indices in our one-step problem. The state and input constraints are defined as $\mathcal{X} := \{[x_1,\, x_2]\in\R^2: [-10,\, -10] \leq [x_1,\, x_2] \leq [10,\, 10] ,\ -15\leq -x_1+x_2\leq 15,\ -15 \leq x_1+x_2 \leq 15   \}$ and $\mathcal{U}:=\{u\in\R:-5 \leq u\leq 5\}$. Problem \eqref{eq:example_maxUncSet} is an instance of \eqref{eq:OptProblem_detGeneral}, and can thus be dealt with by first expressing the uncertainty set as $\mathbb W = Y\mathbb S + y$, where $\mathbb S$ is a primitive set, and then restricting the input policy as $\tilde\pi(\cdot)\in\tilde{\mathcal C}_\text{aff}$. Finally, a suitable concave objective function $\varrho(\cdot)$ is chosen that approximates the objective function $\text{vol}(\cdot)$.

%
%
%
%
%
%

\begin{table}[b]
\tabcolsep = 1.5mm
\caption{Optimal value of different approximation schemes.}
\vspace{-0.5cm}
\label{tab:objectiveValue}
\begin{center}
\begin{tabular}{l|c|c }
\toprule
 Set Family & $\text{vol}(\mathbb W)$ & Approx.\ Quality\\
 \midrule
$(i)$\ \ \ $\mathbb W_\text{rect}^\star \in \Omega_\text{rect}$ &  260.4 & 42.0\% \\
$(ii)$\ \ $\mathbb W_\text{ell}^\star \in \Omega_\text{ell}$ &  514.4 & 83.0\% \\
$(iii)$ $\mathbb W_\text{poly}^\star \in \Omega_\text{poly}$ &  620.2 & 100\%  \\ 
\bottomrule
\end{tabular}
\end{center}
\end{table}

\begin{figure*}[t!]
        \centering
        \includegraphics[trim = 60mm 0mm 40mm 5mm, clip, width=1\textwidth]{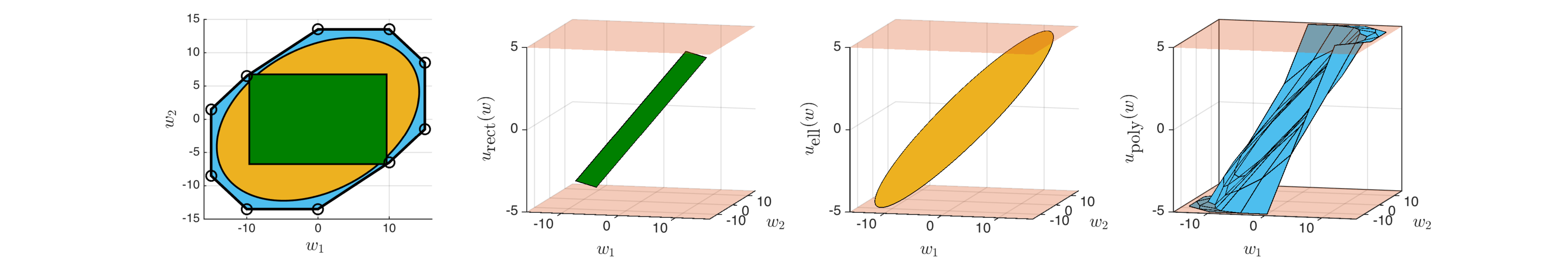}
        \vspace{-0.2cm}
        \caption{\emph{(Left)} The optimal uncertainty sets $\mathbb W_\rect^\star$ (green rectangle), $\mathbb W_\ellipse^\star$ (yellow ellipse) and $\mathbb W_\poly^\star$ (blue polytope). \emph{(Center left)} Affine policy $\pi_\rect(\cdot)$ for rectangular uncertainty set recovered from $\tilde\pi^\star(\cdot)\in\tilde{\mathcal C}_\text{aff}$. \emph{(Center right)} Affine policy $\pi_\ellipse(\cdot)$ for ellipsoidal uncertainty set recovered from $\tilde\pi^\star(\cdot)\in\tilde{\mathcal C}_\text{aff}$. \emph{(Right)} Piece-wise affine policy $\pi_\poly(\cdot)$ for polytopic uncertainty set recovered from $\tilde\pi^\star(\cdot)\in\tilde{\mathcal C}_\text{aff}$.}
        \label{fig:maxUncSet_Policy}
\end{figure*}

In our first study, we examine how the choice of the uncertainty set family affects the solution quality. The following three families of uncertainty sets, all discussed in Section~\ref{sec:familiesUncertaintySets}, are considered: $(i)$ rectangular uncertainty sets $\mathbb W_\text{rect}\in\Omega_\text{rect}$; $(ii)$ ellipsoidal uncertainty sets $\mathbb W_\text{ell}\in\Omega_\text{ell}$; and $(iii)$ polytopic uncertainty sets $\mathbb W_\text{poly}\in\Omega_\text{poly}$ with 30 vertices.  Following Section~\ref{sec:volMaximization}, we use $\varrho_\text{rect}(Y\mathbb S+y) = \varrho_\text{ell}(Y\mathbb S+y)=\log\det(Y)$ to maximize $\text{vol}(\mathbb W_\text{rect})$ and $\text{vol}(\mathbb W_\text{ell})$, respectively. Since we cannot directly maximize the volume of the polytope $\mathbb W_\text{poly}$, we use the quadratic cost function $\varrho_\text{poly}(Y\mathbb S)=-\sum_{j=1}^{n_s} \| d_j^\top - Y_{\cdot j}\|_2^2$ described in Section~\ref{sec:familiesUncertaintySets} to approximate $\text{vol}(\mathbb W_\text{poly})$, where $n_s=30$, and $d_j$ are uniformly distributed on a centered circle of radius forty, i.e.\ $d_j := [40\cos(2\pi(j-1)/n_s),\ 40\sin(2\pi(j-1)/n_s)  ]$.  Figure~\ref{fig:maxUncSet_Policy} (left) depicts the optimal uncertainty sets $\mathbb W_\text{rect}^\star$, $\mathbb W_\text{ell}^\star$, and $\mathbb W_\text{poly}^\star$, while Table~\ref{tab:objectiveValue} lists their volumes.  For this example, we see that $\vol(\mathbb W_\rect^\star)$ is much smaller than $\vol(\mathbb W_\ellipse^\star)$ and $\vol(\mathbb W_\poly^\star)$, because $\Omega_\text{rect}$ restricts the sets to be axis-aligned, whereas both $ \Omega_\text{ell}$ and $ \Omega_\text{poly}$ allow for rotated and skewed sets. Moreover, it is interesting to observe that even though $\varrho_\poly(\cdot)$ does not maximize the volume of $\mathbb W_\poly$ explicitely, it yet yields the largest uncertainty. For this example, it can be verified by taking all combinations of the vertices of the sets $\mathcal X$ and $\mathcal U$ 
that the polytopic approximation method returns the optimal solution to problem~\eqref{eq:example_maxUncSet}, i.e.\ $\mathbb W^\star = \mathbb W_\poly^\star$. Observe now from Figure~\ref{fig:maxUncSet_Policy} (left) that while $\mathbb W_\text{rect}^\star$ corresponds to the largest volume rectangle contained in $\mathbb W^\star$, $\mathbb W_\text{ell}^\star$ is \emph{not} the maximum volume ellipse contained in $\mathbb W_\text{poly}$. As we will see in the following paragraph, this is due to the requirement that $\tilde\pi_\ellipse(\cdot)$ is affine. 

In the second part of this study, we examine the structure of the policy $\pi(\cdot)$ for the different uncertainty sets. Figure~\ref{fig:maxUncSet_Policy} (center left -- right) depicts the policies $\pi(\cdot)$ for all three cases, where we recovered $\pi_\rect(\cdot)$ and $\pi_\ellipse(\cdot)$ using \eqref{eq:recoveringW}, and $\pi_\poly(\cdot)$  using \eqref{eq:recoveringW_pwa} with $L(\cdot)$ in \eqref{L}. We see from Figure~\ref{fig:maxUncSet_Policy} (center right) that the reason $\mathbb W_\ellipse^\star$ is not the maximum volume ellipse is due to the restriction $\pi_\ellipse(\cdot)\in\mathcal C_\text{aff}$. Indeed, we  observe that even for $\mathbb W_\ellipse=\mathbb W_\ellipse^\star$, the associated policy $\pi_\ellipse(\cdot)$ hits the input constraints, and that if $\mathbb W_\ellipse$ were expanded in the $45^\circ$-direction, then the input constraints would be violated. This drawback, in principle, can be alleviated by allowing $\pi_\ellipse(\cdot)$ to be non-linear, using e.g.\ the techniques in \cite{georghiouGeneralized2015, GeorghiouBinaryMP2015}. Note that this issue is not present in the polytopic approximation, since the policy $\pi_\poly(\cdot)$ is piece-wise affine by construction, c.f.\ Proposition~\ref{prop} $(iii)$. Indeed, a careful inspection of $\pi_\poly(\cdot)$ reveals that it is the piece-wise affine nature that allows $\mathbb W_\poly^\star$ to obtain its complex shape with vertices that are not captured by either $\mathbb W_\rect^\star$ or $\mathbb W_\poly^\star$, and which gives rise to the uncertainty set with biggest volume.



\section{Conclusion}\label{sec:conclusion}
This paper presents a unified framework for studying robust optimal control problems with adjustable uncertainty sets. We have shown that these problems can be approximated as a convex optimization problem if $(i)$ the uncertainty sets are representable as affine transformations of primitive sets, and $(ii)$ the control policies are restricted to be affine with respect to the primitive sets. Under these restrictions, the resulting convex optimization problem grows polynomially in the size of the problem data and the description of the primitive set, resulting in a tractable optimization problem. The applicability and importance of such problems was demonstrated through a reserve capacity problem arising in power systems. A numerical example confirms that these problems can be solved efficiently, and that optimal bidding and reserve strategies can be derived.

Future work will focus on reserve provision problems, addressing stability and recursive feasibility when the reserve capacities are bid in a receding horizon fashion. Moreover, we plan on investigating the optimality of affine policies in the reserve provision context, as well as the impact external uncertainties have on the provided reserves. Finally, the presented framework could be extended to cases when only partial state observations are available.

\balance 
\section*{Acknowledgments}
This research was partially supported by Nano-Tera.ch under the project HeatReserves and by the European Commission under
the project Local4Global.

\begin{appendix}

\section{Definition, Lemma and Proofs}
In this section, we provide the proofs for Propositions~\ref{prop:equiv_reparaPolicies}  and \ref{prop:relationshipPolicyW_and_PolicyS}. To do this, we first need to introduce the \emph{lifting operator}, and an auxiliary result given in Lemma~\ref{lem:liftingOperators}. We will use similar theoretical tools as those developed in \cite{georghiouGeneralized2015, GeorghiouBinaryMP2015}.

\begin{definition}[Lifting operator]\label{def:Lifting}
Given $(\mathbf{Y},\mathbf{y})\in\mathcal{Y}$ and $\mathcal{S} = \mathbb{S}_0 \times \ldots \times \mathbb{S}_{N-1}$, where each $\mathbb S_k$ is a primitive set, let $\mathcal{W}:=\mathbf{Y}\mathcal{S}+\mathbf{y}$. We call a mapping $L: \mathcal{W}\to\mathcal{S}$ a \emph{lifting operator} if it satisfies the following  properties: 
 \begin{itemize}
        \item[(P1)] $L(\cdot)= [L_0(\cdot),\ldots,L_{N-1}(\cdot)]$, where, for all $k=0,\ldots,N-1$, $L_k:\mathbb{W}_k \to \mathbb{S}_k$;
        \item[(P2)] $L_k(w) \in \mathbb{S}_k,$ for all $w\in\mathbb{W}_k$;
        \item[(P3)] $\mathbf{Y} L(\mathbf w) + \mathbf y = \mathbf w,$ for all $\mathbf w\in\mathcal{W}$. 
        \end{itemize}
\end{definition}
We refer to $L(\cdot)$ as the  lifting operator  since typically $\text{dim}(\mathcal S)\geq \text{dim}(\mathcal W)$, i.e.\ $n_s\geq n_w$.  Properties (P1) and (P2) ensure that $L(\cdot)$ preserves causality since at time $k$, since $[L_0(\cdot),\ldots,L_{k}(\cdot)]$ contains information up to and including stage $k$ only. 
Notice that, although $L(\cdot)$ is defined through $(\mathbf{Y,y})$ and $\mathcal{S}$, there can be multiple choices of $L(\cdot)$ that satisfy \textnormal{(P1)}--\textnormal{(P3)}. The following example demonstrates the existence of such a lifting for given $(\mathbf{Y,y})\in\mathcal{Y}$, $\mathcal{S}$, and $\mathcal{W}:=\mathbf Y \mathcal S + \mathbf y$, while highlighting some of its properties.
\begin{example}\label{example:Lifting}
Given $(\mathbf{Y},\mathbf{y})\in\mathcal{Y}$, $\mathcal{S} = \mathbb{S}_0 \times \ldots \times \mathbb{S}_{N-1}$ and
$\mathcal{W}:=\mathbf{Y}\mathcal{S}+\mathbf{y}$, let us define $L(\cdot):= [L_0(\cdot),\ldots,L_{N-1}(\cdot)]$ through its components
\begin{equation}\label{L}
L_k(w):=\arg\min_{z\in\mathbb{S}_k}\{\|z\|_2^2: Y_k z+y_k = w\},
\end{equation}
for all $w\in\mathbb W_k$. It is easy to verify that  this lifting operator satisfies conditions \textnormal{(P1)}--\textnormal{(P3)}.
Moreover, if $n_s= n_w$ and $\mathbf{Y}$ is a full rank matrix, then $L(\cdot)$ is the bijective operator $L(\mathbf{w}) = \mathbf{Y}^{-1}(\mathbf{w}-\mathbf{y})$. However, if $\mathbf{Y}$ is rank deficient, then  the mapping from $\mathcal{W}$ to $\mathcal{S}$ is not unique, and therefore, operator \eqref{L} chooses the mapping that minimizes the $2$-norm in the $\mathcal{S}$ space. Finally, note that the above optimization problem defining $L(\cdot)$ is always feasible by construction, and that alternative liftings can be defined using different (also non-linear) objective functions.
\end{example}

The following lemma establishes the relation between  constraint functions in the $\mathcal{W}$-space and  in the $\mathcal{S}$-space.
\begin{lemma}\label{lem:liftingOperators}
Given $(\mathbf{Y},\mathbf{y})\in\mathcal{Y}$, $\mathcal{S} = \mathbb{S}_0 \times \ldots \times \mathbb{S}_{N-1}$, where each $\mathbb S_k$ is a primitive set, let 
        $\mathcal{W}:=\mathbf{Y}\mathcal{S}+\mathbf{y}$ and $L(\cdot)$ be a lifting operator. For any two functions $g:\R^{Nn_w}\to\R$ and $f:\R^{Nn_s}\to\R$, it holds:
\begin{itemize}

\item[(i)]  $g(\mathbf{w})\leq0,\  \forall\mathbf{w}\in\mathcal{W}\quad\Longleftrightarrow \quad  g(\mathbf{Y}\mathbf{s}+\mathbf{y})\leq0 ,\ \forall\mathbf{s}\in\mathcal{S}$;
                

\item[(ii)] $f(\mathbf{s}) \leq 0,\ \forall\mathbf{s}\in\mathcal{S} \quad \Longrightarrow\quad f(L(\mathbf{w})) \leq 0,\ \forall\mathbf{w}\in\mathcal{W}$.    
\end{itemize}
\end{lemma}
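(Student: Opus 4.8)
The plan is to treat both parts as direct consequences of the definition of $\mathcal{W}$ as an affine image together with the defining properties of the lifting operator; neither part requires any analytical machinery or convexity. For part $(i)$, the crucial observation is that $\mathcal{W}$ is \emph{defined} to be $\{\mathbf{Y}\mathbf{s}+\mathbf{y}:\mathbf{s}\in\mathcal{S}\}$. For the forward implication I would fix an arbitrary $\mathbf{s}\in\mathcal{S}$; then $\mathbf{Y}\mathbf{s}+\mathbf{y}\in\mathcal{W}$ by construction, so applying the hypothesis $g\leq 0$ on $\mathcal{W}$ at the point $\mathbf{w}=\mathbf{Y}\mathbf{s}+\mathbf{y}$ yields $g(\mathbf{Y}\mathbf{s}+\mathbf{y})\leq 0$. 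For the converse, I would fix an arbitrary $\mathbf{w}\in\mathcal{W}$ and extract a preimage $\mathbf{s}\in\mathcal{S}$ with $\mathbf{w}=\mathbf{Y}\mathbf{s}+\mathbf{y}$ directly from the definition of $\mathcal{W}$ (equivalently, one may take $\mathbf{s}=L(\mathbf{w})$, which lies in $\mathcal{S}$ by (P1)--(P2) and satisfies $\mathbf{Y}\mathbf{s}+\mathbf{y}=\mathbf{w}$ by (P3)); substituting then gives $g(\mathbf{w})=g(\mathbf{Y}\mathbf{s}+\mathbf{y})\leq 0$. Both directions are thus pure substitutions.

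For part $(ii)$, the argument rests on the lifting operator mapping $\mathcal{W}$ into $\mathcal{S}$. First I would establish $L(\mathbf{w})\in\mathcal{S}$ for every $\mathbf{w}\in\mathcal{W}$: by property (P1) the operator decomposes as $L(\mathbf{w})=[L_0(w_0),\ldots,L_{N-1}(w_{N-1})]$, and by (P2) each component satisfies $L_k(w_k)\in\mathbb{S}_k$, so the concatenation lies in $\mathbb{S}_0\times\cdots\times\mathbb{S}_{N-1}=\mathcal{S}$. Since the hypothesis guarantees $f(\mathbf{s})\leq 0$ for \emph{all} $\mathbf{s}\in\mathcal{S}$, evaluating $f$ at the particular point $L(\mathbf{w})\in\mathcal{S}$ immediately gives $f(L(\mathbf{w}))\leq 0$ for every $\mathbf{w}\in\mathcal{W}$.

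I expect no substantive obstacle; the only point worth flagging is why $(ii)$ is stated as a one-directional implication rather than an equivalence, as in $(i)$. The reason is that $L$ generally maps $\mathcal{W}$ \emph{into} but not \emph{onto} $\mathcal{S}$: in particular, when $n_s>n_w$ the image $L(\mathcal{W})$ is a strict subset of $\mathcal{S}$. Consequently a bound of the form $f(L(\mathbf{w}))\leq 0$ holding over $\mathcal{W}$ constrains $f$ only on this image and says nothing about its values on the points of $\mathcal{S}$ lying outside it, so the converse cannot be recovered. This asymmetry is precisely what is exploited later: part $(i)$ gives the exact equivalence used in Proposition~\ref{prop:equivalence_simpleProblem}, while the one-sided part $(ii)$ suffices for the inner-approximation (feasibility-preserving) direction needed when recovering policies via the lifting.
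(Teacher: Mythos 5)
Your proposal is correct and follows essentially the same route as the paper: both parts reduce to substitution using the definition $\mathcal{W}=\mathbf{Y}\mathcal{S}+\mathbf{y}$ for $(i)$ and property (P2) of the lifting operator for $(ii)$; the only cosmetic difference is that the paper phrases the forward direction of $(i)$ as a proof by contradiction, which is just the contrapositive of your direct substitution. Your closing remark on why $(ii)$ cannot be an equivalence (since $L$ need not be onto $\mathcal{S}$) is accurate and consistent with how the lemma is used in the paper, though it is not part of the paper's proof.
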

\begin{proof}
We prove $``\Longrightarrow"$ of  $(i)$ by contradiction. Assume that  $g(\mathbf{w})\leq0$ for all $\mathbf{w}\in\mathcal{W}$, but there exists $\mathbf{s}\in\mathcal{S}$ such that $g(\mathbf{Y\mathbf{s}+y})>0$. Since $\mathbf{Y\mathbf{s}+y}\in\mathcal{W}$ for all $\mathbf{s}\in\mathcal{S}$ by definition of $\mathcal W$, this leads to a contradiction. The \mbox{$``\Longleftarrow"$} part of $(i)$ is clear, since $\mathcal{W}= \mathbf{Y}\mathcal{S}+\mathbf{y}$ by definition.  Statement $(ii)$ follows immediately from property (P2) in the definition of the lifting operator.
\end{proof}

We now have the necessary tools to prove Propositions~\ref{prop:equiv_reparaPolicies}  and \ref{prop:relationshipPolicyW_and_PolicyS}.
\begin{proof}[Proof of Proposition \ref{prop:equiv_reparaPolicies}]
To simplify notation, we first  rewrite problem \eqref{eq:OptProblem_detGeneral_1} using the  epigraph formulation 
\begin{equation}\label{eq:compact_w}
\hspace{-0.0cm}
\begin{array}{ll}
\text{min} &  \tau - \lambda\bm{\varrho}(\mathcal{W}) \\[0.3ex]
\text{s.t.} &  \tau\in\R,\,\bm{\pi}(\cdot)\in\mathcal{C},\, \mathcal{W}=\mathbf{Y}\mathcal{S}+\mathbf{y},\, (\mathbf{Y},\mathbf{y})\in\mathcal{Y}, \\[0.3ex]
& \!\!\!\left. 
\begin{array}{l}
\mathbf{c}^\top \bm\pi(\mathbf{w}) \leq \tau, \\ [0.3ex]
\mathbf{C} \bm\pi(\mathbf{w}) + \mathbf{Dw}\leq \mathbf{d},                                
\end{array} \right\} \forall\mathbf{w}\in\mathcal{W},
\hspace{-0.0cm}
\end{array}
\hspace{-0.0cm}
\end{equation}
with decision variables $(\tau,\mathcal{W},\bm\pi(\cdot),\mathbf{Y,y})$, and problem \eqref{eq:OptProblem_detGeneral_2} as
\begin{equation}\label{eq:compact_s}
\begin{array}{ll}
\textnormal{min}  &   \tau - \lambda\bm\varrho(\mathbf{Y}\mathcal{S}+\mathbf{y}) \\[0.3ex]
\textnormal{s.t.}         & \tau\in\R,\,\bm{\widetilde\pi}(\cdot)\in\widetilde{\mathcal{C}},\, (\mathbf{Y},\mathbf{y})\in\mathcal{Y},\\[0.3ex]
& \!\!\!\left. 
\begin{array}{l}
\mathbf{c}^\top \bm{\widetilde\pi}(\mathbf{s}) \leq \tau, \\ [0.3ex]
\mathbf{C} \bm{\widetilde\pi}(\mathbf{s}) +\mathbf{D}(\mathbf{Ys+y}) \leq \mathbf{d},                
\end{array} \right\}\;\;\forall\mathbf{s}\in\mathcal{S},
\end{array}
\end{equation}
with decision variables $(\tau,\bm{\widetilde\pi}(\cdot),\mathbf{Y,y})$.

We now show that any feasible solution in \eqref{eq:compact_w} corresponds to a feasible solution in \eqref{eq:compact_s} with the same objective value, and vice versa. Suppose that $(\tau,\mathcal{W},\bm\pi(\cdot),\mathbf{Y},\mathbf{y})$ is a feasible solution to \eqref{eq:compact_w}. We define  $\bm{\widetilde\pi}(\cdot)$ such that $\bm{\widetilde \pi}(\mathbf{s}) :=\bm\pi(\mathbf{Y}\mathbf{s}+\mathbf{y})$, for all $\mathbf{s}\in\mathcal{S}$. Note that due to the block-diagonal form of $\mathbf{Y}$, the constructed policy $\bm{\widetilde\pi}(\cdot)$ is causal,  satisfying the constraint $\bm{\widetilde\pi}(\cdot)\in\widetilde{\mathcal{C}}$. Since $(\tau,\mathcal W,\bm\pi(\cdot),\mathbf{Y},\mathbf{y})$ is feasible in \eqref{eq:compact_w}, it implies that
\begin{equation*}
\begin{array}{rllllll}

        \vspace{0.1cm}
        & \left. \begin{array}{l}
        \mathbf{C}\bm\pi(\mathbf{w}) + \mathbf{Dw} \leq \mathbf{d}, \\[0.3ex]
        \mathbf{c}^\top\bm{\pi}(\mathbf{w})\leq \tau, 
        \end{array} \right\} \forall\mathbf{w}\in\mathcal{W},\\[0.3ex]

        \vspace{0.1cm}
        \Longleftrightarrow & \left. \begin{array}{l}   
        \mathbf{C}\bm\pi(\mathbf{Y}\mathbf{s}+\mathbf{y}) + \mathbf{D}(\mathbf{Ys+y})\leq \mathbf{d}, \\[0.3ex]
        \mathbf{c}^\top \bm\pi(\mathbf{Ys+y})\leq \tau, 
        \end{array} \right\}  \forall\mathbf{s}\in\mathcal{S}, \\[0.3ex]

        \Longleftrightarrow & \left. \begin{array}{l}
        \mathbf{C}\bm{\widetilde \pi}(\mathbf{s}) + \mathbf{D(Ys+y)} \leq \mathbf{d}, \\[0.3ex]
         \mathbf{c}^\top\bm{\widetilde\pi}(\mathbf{s})\leq \tau,  
         \end{array} \right\} \forall\mathbf{s}\in\mathcal{S},
\end{array}
\vspace{-0.0cm}
\end{equation*}
where the first equivalence is due to statement $(i)$ in Lemma \ref{lem:liftingOperators}, and the second equivalence by definition of $\bm{\widetilde\pi}(\cdot)$. Hence, the quadruple $(\tau,\bm{\widetilde\pi}(\mathbf{\cdot}),\mathbf{Y},\mathbf{y})$ is feasible in \eqref{eq:compact_s}. Moreover, since $\mathcal{W}=\mathbf{Y}\mathcal{S}+\mathbf{y}$, problems \eqref{eq:compact_w} and \eqref{eq:compact_s} share the same objective function. Hence, both problems achieve the same objective value.

Conversely, suppose that $(\tau,\bm{\widetilde\pi}(\cdot),\mathbf{Y},\mathbf{y})$ is  feasible in \eqref{eq:compact_s}, and let $\mathcal{W}:=\mathbf{Y}\mathcal{S}+\mathbf{y}$. We now define $\bm\pi(\cdot)$ through $\bm\pi(\mathbf{w}):=\bm{\widetilde\pi}(L(\mathbf{w}))$, for all $\mathbf{w}\in\mathcal{W}$, where $L(\cdot)$ is a lifting operator satisfying Definition~\ref{def:Lifting}, for example the one given in \eqref{L}. By construction of $L(\cdot)$,  $\bm\pi(\cdot)$ remains causal, satisfying $\bm\pi(\cdot)\in\mathcal{C}$.
Since $\bm{\widetilde\pi}(\cdot)$ is feasible in \eqref{eq:compact_s},  we have that
\begin{equation*}
\begin{array}{rll}
        \vspace{0.1cm}
        & \hspace{-0.0cm} \left. \begin{array}{l}
        \mathbf{C}\bm{\widetilde\pi}(\mathbf{s}) + \mathbf{D(Ys+y)} \leq \mathbf{d}, \\[0.3ex]
        \mathbf{c}^\top\bm{\widetilde\pi}(\mathbf{s}) \leq \tau, 
        \end{array} \right\} \forall\mathbf{s}\in\mathcal{S}, \\[0.3ex]
        
        \vspace{0.1cm}
        \Longrightarrow & \hspace{-0.0cm} \left. \begin{array}{l}
         \mathbf{C}\bm{\widetilde\pi}(L(\mathbf{w})) + \mathbf{D}(\mathbf{Y}L(\mathbf{w})+\mathbf{y}) \leq \mathbf{d}, \\[0.3ex]
         \mathbf{c}^\top\bm{\widetilde\pi}(L(\mathbf{w})) \leq \tau, 
         \end{array} \right\} \forall \mathbf{w}\in\mathcal{W}, \\[0.3ex]
         
        \vspace{0.1cm}
        \Longleftrightarrow & \hspace{-0.0cm} \left. \begin{array}{l}
        \mathbf{C}\bm{\pi}(\mathbf{w}) + \mathbf{Dw} \leq \mathbf{d}, \\[0.3ex]
        \mathbf{c}^\top\bm{\pi}(\mathbf{w}) \leq \tau, 
        \end{array} \right\} \forall \mathbf{w}\in\mathcal{W},
\end{array}
\vspace{-0.0cm}
\end{equation*}
where the second line follows from statement $(ii)$ of Lemma~\ref{lem:liftingOperators}, and the third line follows from property (P2) in the definition of the lifting operator and the definition of $\bm\pi(\cdot)$. Hence, $(\tau,\mathcal{W},\bm{\pi}(\cdot),\mathbf{Y},\mathbf{y})$ is feasible in \eqref{eq:compact_w}. Since both problems share the same objective function due to $\mathcal{W}=\mathbf{Y}\mathcal{S}+\mathbf{y}$, both problems achieve the same objective value.
\end{proof}

\begin{proof}[Proof of Proposition~\ref{prop:relationshipPolicyW_and_PolicyS}]
To prove $(i)$, let $(\tau,\mathcal{W},\bm\pi(\cdot),\mathbf{Y,y})$ be a feasible solution to \eqref{eq:compact_w} with $\bm{\pi}(\cdot)\in\mathcal{C}_\text{aff}$.
Since $\bm\pi(\cdot)\in\mathcal{C}_\textnormal{aff}$, it can be written as $\bm\pi(\mathbf{w})=\mathbf{Q}\mathbf{w}+\mathbf{q}$ for some $\mathbf{Q}\in\R^{Nn_u\times N n_w}$ lower block-triangular, and $\mathbf{q}\in\R^{Nn_u}$. Now, let us  consider the candidate solution $(\tau,\bm{\widetilde\pi}(\cdot),\mathbf{Y,y})$ for \eqref{eq:compact_s}, with $\bm{\widetilde\pi}(\mathbf{s}) := (\mathbf{QY})\mathbf{s}+(\mathbf{Qy+q})$. Since $\mathbf{Y}$ is block-diagonal, the matrix $(\mathbf{QY})$ is also lower block-triangular, thus ensuring the causality of $\bm{\widetilde\pi}(\cdot)$. Therefore, $\bm{\widetilde\pi}(\mathbf{s}) := (\mathbf{QY})\mathbf{s}+(\mathbf{Qy+q})\in\mathcal{C}_\textnormal{aff}$. 
Moreover, since  $\bm{\pi}(\cdot)$ is feasible in \eqref{eq:compact_w}, we have that
\begin{equation*}
\begin{array}{lll}
        \vspace{0.1cm}
        & \hspace{-0.0cm} \left. \begin{array}{l}
        \mathbf{C}[\mathbf{Q}\mathbf{w}+\mathbf{q}] + \mathbf{Dw} \leq \mathbf{d}, \\[0.3ex]
        \mathbf{c}^\top[\mathbf{Q}\mathbf{w}+\mathbf{q}]\leq \tau, 
        \end{array} \right\} \forall\mathbf{w}\in\mathcal{W}, \\[0.3ex]
        
        \vspace{0.1cm}
        \Longleftrightarrow & \hspace{-0.0cm} \left. \begin{array}{l}
        \mathbf{C}[\mathbf{Q}(\mathbf{Ys+y})+\mathbf{q}] + \mathbf{D(Ys+y)} \leq \mathbf{d}, \\[0.3ex]
        \mathbf{c}^\top[ \mathbf{Q} (\mathbf{Ys+y})+\mathbf{q}]\leq \tau, 
        \end{array} \right\} \forall\mathbf{s}\in\mathcal{S},  \\[0.3ex]
        
        \vspace{0.1cm}
        \Longleftrightarrow & \hspace{-0.0cm} \left. \begin{array}{l}
        \mathbf{C} \bm{\widetilde\pi}(\mathbf{s}) + \mathbf{D}(\mathbf{Ys+y}) \leq \mathbf{d}, \\[0.3ex]
         \mathbf{c}^\top \bm{\widetilde\pi}(\mathbf{s}) \leq \tau, 
         \end{array} \right\} \forall\mathbf{s}\in\mathcal{S},
\end{array}
\end{equation*}
where the first equivalence is due to statement $(i)$ in Lemma \ref{lem:liftingOperators}, and the second equivalence follows from the definition of $\bm{\widetilde\pi}(\cdot)$. Hence, $(\tau,\bm{\widetilde\pi}(\cdot),\mathbf{Y,y})$ is a feasible solution for \eqref{eq:compact_s}, achieving the same objective value.

\blue{
To prove $(ii)$, let $(\tau,\bm{\widetilde\pi}(\cdot),\mathbf{Y,y})$ be a feasible solution to \eqref{eq:compact_s} with  $\bm{\widetilde\pi}(\cdot)\in\mathcal{C}_\text{aff}$ and $\mathbf Y$ invertible, and define $\mathcal{W}:=\mathbf{Y}\mathcal{S}+\mathbf{y}$. Since $\bm{\widetilde\pi}(\cdot)\in\mathcal{C}_\text{aff}$, it can be written as $\bm{\widetilde\pi}(\bs{s}) = \mathbf{Ps+p}$ for some $\mathbf{P}\in\R^{Nn_u\times N n_s}$ lower block-triangular, and $\mathbf{p}\in\R^{Nn_u}$. Consider now the candidate solution $(\tau,\mathcal{W},\bm{\pi}(\cdot),\mathbf{Y,y})$ for \eqref{eq:compact_w}, with $\bm{\pi}(\mathbf{w}) := \mathbf P \left[ \mathbf Y^{-1} (\mathbf w-\mathbf y)  \right] + \mathbf p$, for all $\mathbf w\in\mathcal W$. It is easy to see that $\bm\pi(\cdot)$ is an affine policy. Moreover, since $\mathbf Y$ is block-diagonal by construction, see \eqref{eq:TrafoS2W}, $\bm\pi(\cdot)$ is also causal, hence satisfying $\bm\pi(\cdot)\in\mathcal C_\text{aff}$. Finally, since $\bm{\widetilde\pi}(\cdot)$ is feasible in \eqref{eq:compact_s}, we have that
}
\begin{equation*}
\begin{array}{rll}
        \vspace{0.1cm}
        & \left. \begin{array}{l}
        \mathbf{C}[\mathbf{Ps+p}] + \mathbf{D(Ys+y)} \leq \mathbf{d}, \\ [0.3ex]
        \mathbf{c}^\top [\mathbf{Ps+p}] \leq \tau , 
        \end{array} \right\} \forall\mathbf{s}\in\mathcal{S},  \\[0.3ex]
        
        \vspace{0.1cm}
        \Longleftrightarrow &  \left. \begin{array}{l}
        \mathbf{C} [\mathbf{P}\mathbf Y^{-1}(\mathbf w - \mathbf y)+\mathbf{p}] + \mathbf{Dw} \leq \mathbf{d}, \\ [0.3ex]
        \mathbf{c}^\top [\mathbf{P}\mathbf Y^{-1}(\mathbf w - \mathbf y)+\mathbf{p} ] \leq \tau, 
        \end{array} \right\} \forall \mathbf{w} \in\mathcal{W},  \\[0.3ex]
        
        \vspace{0.1cm}
        \Longleftrightarrow & \left. \begin{array}{l}
        \mathbf{C} \bm\pi(\mathbf w) + \mathbf{D}\mathbf{w} \leq \mathbf{d}, \\ [0.3ex]
        \mathbf{c}^\top \bm\pi(\mathbf w)  \leq \tau, 
        \end{array} \right\} \forall \mathbf{w}\in\mathcal{W},
\end{array}
\end{equation*}
\blue{
where the first equivalence follows from statement $(i)$ of Lemma~\ref{lem:liftingOperators} and the fact that $\mathbf w = \mathbf Y \mathbf s + \mathbf y$ is equal to $\mathbf s = \mathbf Y^{-1} ( \mathbf w - \mathbf y )$, and the second equivalence follows from the definition of $\bm\pi(\cdot)$.  Hence, $(\tau,\mathcal{W},\bm{\pi}(\cdot),\mathbf{Y,y})$ is a feasible solution for \eqref{eq:compact_w}, and achieves the same objective value.
}

To prove $(iii)$,  let $(\tau,\bm{\widetilde\pi}(\cdot),\mathbf{Y,y})$ be a feasible solution to \eqref{eq:compact_s} with  $\bm{\widetilde\pi}(\cdot)\in\mathcal{C}_\text{aff}$, and define $\mathcal{W}:=\mathbf{Y}\mathcal{S}+\mathbf{y}$. Since $\bm{\widetilde\pi}(\cdot)\in\mathcal{C}_\text{aff}$, it can be written as $\bm{\widetilde\pi}(\bs{s}) = \mathbf{Ps+p}$ for some $\mathbf{P}\in\R^{Nn_u\times N n_s}$ lower block-triangular, and $\mathbf{p}\in\R^{Nn_u}$. Let $L(\cdot)=[L_0(\cdot),\ldots,L_{N-1}(\cdot)]$ be the lifting operator with its elements $L_k(\cdot)$ defined as in \eqref{L}, and  consider the candidate solution $(\tau,\mathcal{W},\bm{\pi}(\cdot),\mathbf{Y,y})$ for \eqref{eq:compact_w}, with $\bm{\pi}(\mathbf{w}) := \mathbf{P}L(\mathbf{w})+\mathbf{p}$, for all $\mathbf w\in\mathcal W$. Under the assumption of $\mathcal{S}$ being a polytope, $L(\cdot)$ is a continuous piece-wise affine  function defined through the solution of a parametric quadratic optimization problem \cite[Theorem 7.7]{BorBemMorari_book}. Moreover, since  $L(\cdot)$ satisfies  properties \textnormal{(P1)} and \textnormal{(P2)} in Definition~\ref{def:Lifting}, it preserves causality, and $\bm{\pi}(\mathbf{w}) := \mathbf{P}L(\mathbf{w})+\mathbf{p}\in\mathcal{C}_\text{pwa}$  is a causal policy. Finally, since $\bm{\widetilde\pi}(\cdot)$ is feasible in \eqref{eq:compact_s}, we have that
\begin{equation*}
\begin{array}{rll}
        \vspace{0.1cm}
        & \left. \begin{array}{l}
        \mathbf{C}(\mathbf{Ps+p}) + \mathbf{D(Ys+y)} \leq \mathbf{d}, \\ [0.3ex]
        \mathbf{c}^\top(\mathbf{Ps+p}) \leq \tau , 
        \end{array} \right\} \forall\mathbf{s}\in\mathcal{S},  \\[0.3ex]
        
        \vspace{0.1cm}
        \Longrightarrow &  \left. \begin{array}{l}
        \mathbf{C}(\mathbf{P}L(\mathbf{w})+\mathbf{p}) + \mathbf{D(Y}L(\mathbf{w})+\mathbf{y}) \leq \mathbf{d}, \\ [0.3ex]
        \mathbf{c}^\top(\mathbf{P}L(\mathbf{w})+\mathbf{p}) \leq \tau, 
        \end{array} \right\} \forall \mathbf{w} \in\mathcal{W},  \\[0.3ex]
        
        \vspace{0.1cm}
        \Longleftrightarrow & \left. \begin{array}{l}
        \mathbf{C}(\mathbf{P}L(\mathbf{w})+\mathbf{p}) + \mathbf{D}\mathbf{w} \leq \mathbf{d}, \\ [0.3ex]
        \mathbf{c}^\top(\mathbf{P}L(\mathbf{w})+\mathbf{p}) \leq \tau, 
        \end{array} \right\} \forall \mathbf{w}\in\mathcal{W},
\end{array}
\end{equation*}
where the second line follows from statement $(ii)$ of Lemma~\ref{lem:liftingOperators}, and the third line follows from property (P3).
 Hence, $(\tau,\mathcal{W},\bm{\pi}(\cdot),\mathbf{Y,y})$ is a feasible solution for \eqref{eq:compact_w}, and achieves the same objective value.
\end{proof}

\end{appendix}

\bibliographystyle{unsrt}
\bibliography{library}

\end{document}